\documentclass[10pt,a4paper]{article}
\usepackage[utf8]{inputenc}
\usepackage{amsfonts}
\usepackage{diagbox}
\usepackage{ulem}
\usepackage{amsmath,amssymb,amsthm, mathrsfs}
\usepackage{hyperref}
\usepackage{fancyhdr}
\usepackage[all]{xy}
\usepackage{ulem}
\usepackage[usenames,dvipsnames]{color}
\usepackage{makeidx}
\usepackage{graphicx}
\usepackage{amsthm}
\usepackage{multirow}
\usepackage{xcolor}
\usepackage{amsbsy}
\usepackage[colorinlistoftodos,textwidth=25mm]{todonotes}

\usepackage[margin=1in]{geometry}
\newtheorem{theorem}{Theorem}
\newtheorem{corollary}[theorem]{Corollary}
\newtheorem{lemma}[theorem]{Lemma}

\newtheorem{example}[theorem]{Example}

\newtheorem{remark}[theorem]{Remark}

\title{New results on linear permutation polynomials with coefficients in a subfield}

\author{
 E. J. García-Claro\footnote{corresponding author.}\\
  \small{Departamento de Matemáticas}\\
  \small{Universidad Autónoma Metropolitana-Iztapalapa, Ciudad de México, México}\\
  \small{C\'odigo postal 09340, Ciudad de M\'exico, M\'exico}\\
  \small{eliasjaviergarcia@gmail.com}
  \and
  Gustavo Terra Bastos\\
  \small{Department of Mathematics and Statistics}\\
  \small{Federal University of São João del-Rei, São João del-Rei, Minas Gerais}\\
  \small{Frei Orlando Square, 170,  Minas Gerais, São João del-Rei, 36307-334, Brazil}\\
  \small{gtbastos@ufsj.edu.br}
}

%\date{\today}

\begin{document}

\maketitle

\begin{abstract}

Some families of linear permutation polynomials of $\mathbb{F}_{q^{ms}}$ with coefficients in $\mathbb{F}_{q^{m}}$ are explicitly described (via conditions on their coefficients) as isomorphic images of classical subgroups of the general linear group of degree $m$ over the ring $\frac{\mathbb{F}_{q}[x]}{\left\langle x^{s}-1 \right\rangle}$.  In addition, the sizes of some of these families are computed. Finally, several criteria to construct linear permutation polynomials of $\mathbb{F}_{q^{2p}}$ (where $p$ is a prime number) with prescribed coefficients in $\mathbb{F}_{q^{2}}$ are given. Examples illustrating the main results are presented.\\
\end{abstract}

\textit{keywords}: linear polynomials, linearized polynomials, $q$-polynomials, permutation polynomials; linear permutation polynomials; general linear group; special linear group; Borel subgroup.\\

\textit{Mathematics Subject Classification}: 11T06, 11T71, 11T55.

\section*{Introduction}
Let $\mathbb{F}_q$ denote the finite field with $q$ elements. A polynomial $f(x)\in \mathbb{F}_{q^{n}}[x]$ is called a permutation polynomial (PP) if the function $f:\mathbb{F}_{q^{n}} \rightarrow \mathbb{F}_{q^{n}} $ given by $a\mapsto f(a)$ for all $a\in \mathbb{F}_{q^{n}}$ is a bijection, i.e., $f$ is a permutation of $\mathbb{F}_{q^{n}}$. PPs play a crucial role in several research areas, including cryptography, as part of encryption algorithms (see \cite{ prince2012, menezes1996,  vaught2016}); and coding theory, in the study and construction of codes with desired properties (see \cite{ ding2013,gerike2020,mullen2013, zhao2019}).\\  

A linear (linearized) polynomial (or $q$-polynomial) over $\mathbb{F}_{q^{n}}$ is a polynomial  $f(x) = \sum_{i=0}^{n-1}f_i x^{q^{i}} \in \mathbb{F}_{q^{n}} [x]$. These polynomials are $\mathbb{F}_{q}$-linear endomorphisms of $\mathbb{F}_{q^{n}}$, and form an $\mathbb{F}_{q}$-algebra with the sum and composition of functions. Let $n,m,s\in \mathbb{Z}_{>0}$ and $n=ms$, then the polynomials with shape $f(x) = \sum_{i=0}^{n-1}f_i x^{q^{i}} \in \mathbb{F}_{q^{m}} [x]$ form a sub-algebra $R_{m}$ of the linear polynomials over $\mathbb{F}_{q^{n}}$.\\

Let $\mathcal{R}_{q,s}:=\frac{\mathbb{F}_{q} [x]}{\left\langle x^{s}-1 \right\rangle}$. In \cite{bcv, micheli2015, wuliu} it was proved, using distinct thecniques, that $R_{m}$ is isomorphic to the algebra $M_{m}(\mathcal{R}_{q,s})$ of the $m\times m$ matrices with coefficients in $\mathcal{R}_{q,s}$. This guarantees that PPs in $R_{m}$ are in one-one correspondence with elements of the general linear group $GL_{m}(\mathcal{R}_{q,s})$ of degree $m$ over $R_{q,s}$. When working with linear polynomials, criteria to determine whether they are permutations commonly consist of verifying if their coefficients satisfy certain conditions. Nonetheless, the isomorphisms  presented in \cite{bcv, micheli2015, wuliu} do not allow such descriptions. This kind of description has been made for the algebra of linear polynomials over $\mathbb{F}_{q^{n}}$ in \cite{ yuan2011,zhou2008}; but, up to our knowledge, it has not been made for the PPs in $R_{m}$ yet.\\

In this paper, our primary aims are to explicitly describe diverse families of PPs in $R_{m}$ that are images of subgroups of $GL_{m}(\mathcal{R}_{q,s})$ under the inverse of the isomorphism given in  \cite{bcv}, and to compute the sizes of some of these families of PPs.\\

The manuscript is organized as follows: In Section \ref{S1} a particular isomorphism of $\mathbb{F}_{q}$-algebras between $M_{m}(\mathcal{R}_{q,s})$ is constructed extending the approach given in \cite{bcv}. This isomorphism is later used in Section \ref{S2} to determine explicit forms of PPs in $R_{m}$ that are images of certain matrices in $M_{m}(\mathcal{R}_{q,s})$ (among which are elementary matrices). Using these results, in Section \ref{S3}, explicit descriptions of families of PPs that are  image of subgroups of $GL_{m}(\mathcal{R}_{q,s})$ are presented. The sizes of some of these families are later computed in Section \ref{S4}. Finally, in Section \ref{S5}, two particular constructions of linear PPs of $\mathbb{F}_{q^{2p}}$ (where $p$ is a prime number) with coefficients in $\mathbb{F}_{q^{2}}$ are given.

\section{An isomorphism between $M_{m}\left(\mathcal{R}_{q,s}\right)$ and $R_m$}\label{S1}

 In this section, an isomorphism between the  algebra  $M_{m}\left(\mathcal{R}_{q,s}\right)$ (of the $m\times m$ matrices with coefficients in $\mathcal{R}_{q,s}$)  and  $R_m$  is presented. This will be used later to compute distinct families of PPs in  $R_m$.

\begin{theorem}\cite[Theorem 3.1]{bcv}\label{thbcv}
  $R_m$ and the  matrix algebra  $M_{m}\left(\mathcal{R}_{q,s}\right)$ are isomorphic $\mathbb{F}_{q}$-algebras.  \\  
\end{theorem}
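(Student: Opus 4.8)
The plan is to realize $R_m$ inside the ambient $\mathbb{F}_q$-algebra $E = \mathrm{End}_{\mathbb{F}_q}(\mathbb{F}_{q^n})$ of all $\mathbb{F}_q$-linear endomorphisms of $\mathbb{F}_{q^n}$ (which is $\cong M_n(\mathbb{F}_q)$), and to characterise it as a centralizer. Writing $\phi$ for the $q$-Frobenius $a \mapsto a^q$ and $m_c$ for multiplication by $c \in \mathbb{F}_{q^n}$, every $q$-polynomial $f = \sum_{i=0}^{n-1} f_i x^{q^i}$ acts as $\sum_i m_{f_i}\phi^i$, and the relation $\phi m_c = m_{c^q}\phi$ together with $\phi^n = \mathrm{id}$ identifies the $q$-polynomials over $\mathbb{F}_{q^n}$ with all of $E$. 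First I would set $\tau = \phi^m$ and compute $\phi^m\left(\sum_i m_{f_i}\phi^i\right)\phi^{-m} = \sum_i m_{f_i^{q^m}}\phi^i$; hence such an endomorphism is fixed by conjugation by $\tau$ exactly when every $f_i \in \mathbb{F}_{q^m}$. This shows $R_m = C_E(\tau)$, the centralizer of $\tau$, equivalently the ring $\mathrm{End}_{\mathbb{F}_q[\tau]}(\mathbb{F}_{q^n})$ of endomorphisms commuting with the action of $\tau$.

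Next I would analyse the subalgebra $\mathbb{F}_q[\tau] \subseteq E$. Since $\tau$ generates $\mathrm{Gal}(\mathbb{F}_{q^n}/\mathbb{F}_{q^m})$, the automorphisms $1, \tau, \dots, \tau^{s-1}$ are distinct and, by Artin's linear independence of characters, $\mathbb{F}_{q^n}$-linearly independent; consequently no nonzero polynomial over $\mathbb{F}_q$ of degree less than $s$ annihilates $\tau$, while $\tau^s = \phi^{ms} = \phi^n = \mathrm{id}$. Thus the minimal polynomial of $\tau$ is $x^s - 1$ and $\mathbb{F}_q[\tau] \cong \mathbb{F}_q[x]/\langle x^s - 1\rangle = \mathcal{R}_{q,s}$, the isomorphism sending $\tau \mapsto x$ (this argument is insensitive to whether $\gcd(q,s)=1$). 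The problem is thereby reduced to identifying $\mathrm{End}_{\mathbb{F}_q[\tau]}(\mathbb{F}_{q^n})$ with a matrix ring, which will follow once $\mathbb{F}_{q^n}$ is shown to be a free $\mathbb{F}_q[\tau]$-module.

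The crux — and the step I expect to be the main obstacle, since $\mathcal{R}_{q,s}$ is in general neither a field nor a domain, so finite generation does not force freeness — is to produce an explicit free basis. Here I would invoke the normal basis theorem for $\mathbb{F}_{q^n}/\mathbb{F}_{q^m}$ to get $\alpha$ with $\{\alpha, \tau\alpha, \dots, \tau^{s-1}\alpha\}$ an $\mathbb{F}_{q^m}$-basis, fix an $\mathbb{F}_q$-basis $c_1, \dots, c_m$ of $\mathbb{F}_{q^m}$, and set $\beta_k = c_k\alpha$. Because each $c_k$ is fixed by $\tau$, one has $\tau^j\beta_k = c_k\,\tau^j\alpha$, so $\{\tau^j\beta_k : 0 \le j \le s-1,\ 1\le k\le m\}$ is an $\mathbb{F}_q$-basis of $\mathbb{F}_{q^n}$; this is exactly the statement that $\beta_1, \dots, \beta_m$ freely generate $\mathbb{F}_{q^n}$ over $\mathbb{F}_q[\tau]$, and a dimension count ($\dim_{\mathbb{F}_q} = ms = n$ on both sides) confirms the rank is $m$. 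Expressing each $T \in R_m$ by its matrix $\left(A_{lk}\right)$ relative to this free basis, where $T\beta_k = \sum_l A_{lk}\beta_l$ with $A_{lk} \in \mathbb{F}_q[\tau] \cong \mathcal{R}_{q,s}$, then yields the desired $\mathbb{F}_q$-algebra isomorphism $R_m \xrightarrow{\sim} \mathrm{End}_{\mathbb{F}_q[\tau]}(\mathbb{F}_{q^n}) \cong M_m(\mathcal{R}_{q,s})$, and its explicit form (coinciding with the map of \cite{bcv}) is precisely what Section \ref{S1} will need downstream.
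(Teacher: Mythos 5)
Your proof is correct, but it reaches the isomorphism by a genuinely different route than the one recorded here from \cite{bcv}. The paper's route (following Brawley--Carlitz--Vaughan) is constructive: it splits $g(x)=\sum_{i=0}^{s-1}g_i\bigl(x^{q^{mi}}\bigr)$, takes the matrix $[g_i]_B$ of each component on an $\mathbb{F}_q$-basis $B$ of $\mathbb{F}_{q^m}$, and defines $\varphi(g)=\sum_i [g_i]_B\,x^i$, with the homomorphism properties checked by direct computation. You instead characterize $R_m$ structurally as the centralizer $C_E(\tau)$ of $\tau=\phi^m$ inside $E=\mathrm{End}_{\mathbb{F}_q}(\mathbb{F}_{q^n})$, identify $\mathbb{F}_q[\tau]\cong\mathcal{R}_{q,s}$ via Artin independence (so the minimal polynomial of $\tau$ is exactly $x^s-1$), and reduce to $\mathrm{End}_{\mathbb{F}_q[\tau]}(\mathbb{F}_{q^n})\cong M_m(\mathcal{R}_{q,s})$ once $\mathbb{F}_{q^n}$ is shown free of rank $m$ over $\mathbb{F}_q[\tau]$. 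You correctly identify the freeness step as the crux --- $\mathcal{R}_{q,s}$ is not semisimple when $\gcd(q,s)>1$, so cardinality alone proves nothing --- and your basis $\beta_k=c_k\alpha$ built from a normal element $\alpha$ of $\mathbb{F}_{q^n}/\mathbb{F}_{q^m}$ does the job, since $\tau^j\beta_k=c_k\tau^j\alpha$ runs over an $\mathbb{F}_q$-basis of $\mathbb{F}_{q^n}$. What your approach buys is conceptual economy (multiplicativity of the isomorphism is automatic for an endomorphism-ring identification, and the appearance of $\mathcal{R}_{q,s}$ is explained as $\mathbb{F}_q[\tau]$ rather than decreed); what the paper's explicit $\varphi$ buys is the concrete coefficient formula that Sections \ref{S1}--\ref{S2} rely on. Your closing parenthetical that your isomorphism ``coincides with the map of \cite{bcv}'' is the one unjustified assertion: it holds only after aligning your free basis $\{\beta_k\}$ with the basis $B$ used in $\varphi$ and checking a short computation, and nothing in the theorem's statement requires it, so you should either prove it or drop the claim.
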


Since $R_m \cong M_{m}\left(\mathcal{R}_{q,s}\right)$ as $\mathbb{F}_{q}$-algebras, taking the image of invertible matrices in $M_{m}\left(\mathcal{R}_{q,s}\right)$ under a fixed isomorphism gives a way to construct permutation polynomials in $R_{m}$. Even so, the isomorphism provided in the proof of the Theorem \ref{thbcv} is not presented in a form that allows such kind of computations. In that Theorem,  an isomorphism $\varphi$ that goes from  $R_m$ to $M_{m}\left(\mathcal{R}_{q,s}\right) $ is introduced under the following context:

If $\displaystyle{g(x)=\sum_{i=0} ^{n-1} a_{i}x^{q^i}}\in R_{m}$, $g(x)$ may be rewritten as
\begin{equation}
g(x)=\sum_{i=0} ^{s-1}\sum_{r=0} ^{m-1} a_{im+ r}x^{q^{im+r}} = \sum_{i=0}^{s-1} g_i \left(x^{q^{mi}}\right), 
\end{equation}
where $\displaystyle{g_i (x) := \sum_{r = 0} ^{m-1} a_{im+r}x^{q^r}}$, for $i=1,2,...,s-1$. Thus, if $B$ is an ordered $\mathbb{F}_{q}$-basis of $\mathbb{F}_{q^m}$ and $\left[g_i (x) \right]_B$ denotes the matrix of $g_{i}(x)$ on the basis $B$ for $i=1,...,s$, then 
\begin{equation}
\begin{array}{cccc}
\varphi:& R_m  &\longrightarrow & M_{m}\left(\mathcal{R}_{q,s}\right) \\
& g(x)  &\mapsto & \varphi(g(x)) = \left[g_0 (x) \right]_B + \left[g_1 (x)\right]_B \cdot x + \cdots +\left[g_{s-1} (x)\right]_B \cdot x^{s-1}
\end{array}
\end{equation}
is an isomorphism of $\mathbb{F}_{q}$-algebras.\\

Now we are going to describe the isomorphism $\psi=\varphi^{-1}$. This will allow studying how the entries of an arbitrary matrix $A\in M_{m}\left(\mathcal{R}_{q,s}\right)$ influence the coefficients of the linear polynomial $\psi(A)\in R_m$, so that we may produce PPs in $R_{m}$ with desired forms just by taking adequate matrices in $M_{m}\left(\mathcal{R}_{q,s}\right)$. Using the same notation as before, if $B$ is an ordered $\mathbb{F}_{q}$-basis of $\mathbb{F}_{q^m}$, then 
\begin{equation}\label{mainiso}
\begin{array}{cccc}
\psi:& M_{m}\left(\mathcal{R}_{q,s}\right)   &\longrightarrow &  R_m\\
& G = G_{0} + G_{1} \cdot x + \cdots +G_{s-1} \cdot x^{s-1}  &\mapsto &  \displaystyle{\psi(G)=g(x):=\sum_{i=0}^{s-1} g_i \left(x^{q^{mi}}\right)}
\end{array}
\end{equation}
where $\displaystyle{g_i (x) = \sum_{r = 0} ^{m-1} a_{ir}x^{q^r}}$ and $[g_{i}]_{B}=G_{i}$ for $i=0,...,s-1$.\\

Thus, to know $\psi(G)$ for a given $G\in  M_{m}\left(\mathcal{R}_{q,s}\right) $, it is necessary to compute the coefficients of any polynomial with a form as the $g_{i}(x)$ for $i=0,...,s-1$. Since this kind of polynomial is $\mathbb{F}_{q}$-linear, computing their coefficients may be solved just by knowing the image of the elements in a $\mathbb{F}_{q}$-basis of $\mathbb{F}_{q^{m}}$. Theorem~\ref{gi-coefficients} uses dual normal bases (\cite[Definition 2.30]{finitefields}) to solve that computation in an easy way, so we are going to recall some definitions that will be necessary later. Let $\mathbf{F}$ be an extension of degree $m$ of a finite field $F$. If  $B=\{\alpha_{1} , \alpha_{2} ,\ldots , \alpha_{m} \}$ and $B'=\{u_{1}, u_{2},\ldots, u_{m}\}$ are $F$-bases of $\mathbf{F}$, then $B$ and $B'$ are said to be dual (or complementary) bases if for $1 \leq i,j \leq m$,

\[Tr_{\mathbf{F}/F} \left(\alpha_{i} u_{j} \right) = \begin{cases} 1   &\mbox{if } i=j  \\
0 & \mbox{if } i\neq j \end{cases}.\]

 An element $\alpha\in \mathbf{F}$ is called normal over $F$ (or $F$-normal) if the set $A=\{\alpha, \alpha^{q},..., \alpha^{q^{m-1}}\}$ is an  $F$-basis of $\mathbf{F}$ (\cite[Definition 2.32]{finitefields}); and in such case, $A$ is called a normal basis of $\mathbf{F}$ over $F$. A normal basis always exists, for any finite extension of a finite field (see \cite[Theorem 2.35]{finitefields}). In fact, this kind of basis can be built using, for example, \cite[Theorem 5.2.7]{gao}. In addition, the dual of a normal basis is also a normal basis (see  \cite[Theorem 2.2.7]{gao}).\\

\begin{theorem}\label{gi-coefficients}
Let $\alpha \in \mathbb{F}_{q^{m}}$ be an $\mathbb{F}_{q}$-normal element, $B=\left\{\alpha, \alpha^{q},..., \alpha^{q^{m-1}}\right\}$ be the ordered normal basis determined by $\alpha$, and $B'=\left\{u, u^{q},..., u^{q^{m-1}}\right \}$ the dual basis of $B$. Let $\displaystyle{h (x) = \sum_{r = 0} ^{m-1} a_{r}x^{q^r}}\in R_{m}$. Then

 \[
 \begin{bmatrix}
a_{0} \\ a_{1} \\ \vdots \\ a_{m-1}
\end{bmatrix}
= \begin{bmatrix}
u & u^{q} & \cdots & u^{q^{m-1}} \\
u^{q} & u^{q^{2}} & \cdots & u \\
\vdots  & \vdots  & \ddots & \vdots  \\
u^{q^{m-1}} &u & \cdots & u^{q^{m-2}} 
\end{bmatrix}
\begin{bmatrix}
h(\alpha) \\ h(\alpha^{q}) \\ \vdots \\ h(\alpha^{q^{m-1}}) 
\end{bmatrix}.
\]

\end{theorem}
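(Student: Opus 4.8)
The plan is to verify the claimed matrix identity one coordinate at a time, by substituting the definition of $h$ together with its values at the basis conjugates, and then collapsing the resulting double sum into a single trace that the dual-basis hypothesis evaluates immediately. First I would record what the identity asserts in a single row: the $(i,j)$ entry of the matrix is $u^{q^{(i+j)\bmod m}}$ (row $0$ reads $u,u^q,\dots,u^{q^{m-1}}$, each subsequent row being the cyclic shift), so reading off row $i$ the claim is precisely $a_i = \sum_{j=0}^{m-1} u^{q^{(i+j)\bmod m}}\, h(\alpha^{q^j})$. Since $\alpha, u \in \mathbb{F}_{q^m}$ satisfy $\beta^{q^m}=\beta$, reducing exponents modulo $m$ never changes a value, so throughout I may work with the honest powers $u^{q^{i+j}}$ and $\alpha^{q^{j+r}}$ without tracking the modular reduction.

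Next I would expand, using that $h$ is a $q$-polynomial, $h(\alpha^{q^j}) = \sum_{r=0}^{m-1} a_r (\alpha^{q^j})^{q^r} = \sum_{r=0}^{m-1} a_r\, \alpha^{q^{j+r}}$, substitute this into the right-hand side of row $i$, and interchange the two summations to obtain $\sum_{r=0}^{m-1} a_r \left( \sum_{j=0}^{m-1} u^{q^{i+j}}\, \alpha^{q^{j+r}} \right)$. The key step is to recognize the inner sum as a trace: factoring through the Frobenius $\phi(z)=z^q$ gives $u^{q^{i+j}}\alpha^{q^{j+r}} = \phi^{j}\!\left(u^{q^i}\alpha^{q^r}\right) = \left(u^{q^i}\alpha^{q^r}\right)^{q^j}$, so summing over $j=0,\dots,m-1$ produces exactly $Tr_{\mathbb{F}_{q^m}/\mathbb{F}_q}\!\left(u^{q^i}\alpha^{q^r}\right)$, a full (rather than partial) trace thanks to the cyclic index shift in the circulant matrix.

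Finally, because $B'=\{u,u^q,\dots,u^{q^{m-1}}\}$ is by hypothesis the dual basis of $B=\{\alpha,\alpha^q,\dots,\alpha^{q^{m-1}}\}$, the defining property yields $Tr_{\mathbb{F}_{q^m}/\mathbb{F}_q}(\alpha^{q^r} u^{q^i}) = \delta_{ir}$, so the inner sum equals $1$ when $r=i$ and $0$ otherwise. The whole expression therefore collapses to $a_i$, which proves row $i$; letting $i$ range over $0,\dots,m-1$ establishes the full identity. I expect no genuine obstacle in this argument: the only point requiring care is the Frobenius factorization combined with the bookkeeping of the shift $(i+j)\bmod m$, which is exactly what makes each inner sum close up into a complete orbit under Frobenius and hence into a trace that the dual-basis relation annihilates off the diagonal.
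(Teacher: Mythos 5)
Your proof is correct and follows essentially the same route as the paper: both rest on the expansion $h(\alpha^{q^{l}})=\sum_{r}a_{r}\alpha^{q^{r+l}}$ and the dual-basis orthogonality. The only difference is presentational --- the paper states that the circulant-type matrix in the $u$'s is the inverse of the one in the $\alpha$'s and leaves the verification implicit, whereas you carry out that verification row by row via $Tr_{\mathbb{F}_{q^m}/\mathbb{F}_q}\left(u^{q^{i}}\alpha^{q^{r}}\right)=\delta_{ir}$, thereby supplying the detail the paper glosses over.
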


\begin{proof}
Since $\displaystyle{h (\alpha^{q^{l}}) = \sum_{r = 0} ^{m-1} a_{r}\alpha^{q^{r+l}}}$, then

\[
\left[\alpha^{q^{l}},\alpha^{q^{1+l}},..., \alpha^{q^{m-1+l}}\right]
\begin{bmatrix}
a_{0} \\ a_{1} \\ \vdots \\ a_{m-1}
\end{bmatrix}= h(\alpha^{q^{l}})
\]

for $l=0,...,m-1$. Thus, if  

\[A=\begin{bmatrix}
\alpha & \alpha^{q} & \cdots & \alpha^{q^{m-1}} \\
\alpha^{q} & \alpha^{q^{2}} & \cdots & \alpha \\
\vdots  & \vdots  & \ddots & \vdots  \\
\alpha^{q^{m-1}} &\alpha & \cdots & \alpha^{q^{m-2}} 
\end{bmatrix},\]

then

\[
A\cdot 
\begin{bmatrix}
a_{0} \\ a_{1} \\ \vdots \\ a_{m-1}
\end{bmatrix}=
\begin{bmatrix} 
\alpha & \alpha^{q} & \cdots & \alpha^{q^{m-1}} \\
\alpha^{q} & \alpha^{q^{2}} & \cdots & \alpha \\
\vdots  & \vdots  & \ddots & \vdots  \\
\alpha^{q^{m-1}} &\alpha & \cdots & \alpha^{q^{m-2}} 
\end{bmatrix}
\begin{bmatrix}
a_{0} \\ a_{1} \\ \vdots \\ a_{m-1}
\end{bmatrix}= \begin{bmatrix}
h(\alpha) \\ h(\alpha^{q}) \\ \vdots \\ h(\alpha^{q^{m-1}}) 
\end{bmatrix}.
\]

On the other hand, since $B'=\left\{u, u^{q},..., u^{q^{m-1}}\right \}$ is the dual basis of $B=\{\alpha, \alpha^{q},..., \alpha^{q^{m-1}}\}$, then $A$ is invertible with inverse 

\[A^{-1}=\begin{bmatrix}
u & u^{q} & \cdots & u^{q^{m-1}} \\
u^{q} & u^{q^{2}} & \cdots & u \\
\vdots  & \vdots  & \ddots & \vdots  \\
u^{q^{m-1}} &u & \cdots & u^{q^{m-2}} 
\end{bmatrix}.\]
Therefore,  \[
 \begin{bmatrix}
a_{0} \\ a_{1} \\ \vdots \\ a_{m-1}
\end{bmatrix}
= A^{-1} \begin{bmatrix}
h(\alpha) \\ h(\alpha^{q}) \\ \vdots \\ h(\alpha^{q^{m-1}}) 
\end{bmatrix}=  \begin{bmatrix}
u & u^{q} & \cdots & u^{q^{m-1}} \\
u^{q} & u^{q^{2}} & \cdots & u \\
\vdots  & \vdots  & \ddots & \vdots  \\
u^{q^{m-1}} &u & \cdots & u^{q^{m-2}} 
\end{bmatrix}
\begin{bmatrix}
h(\alpha) \\ h(\alpha^{q}) \\ \vdots \\ h(\alpha^{q^{m-1}}) 
\end{bmatrix}.
\]

\end{proof}

Note that Theorem \ref{gi-coefficients} is useful to construct PPs via the isomorphism $\psi$. In particular, the computations are made easier  if $B$ is self-dual. Now we recall some results on self-dual normal bases that could be of use to apply distinct results in this work and to build examples. The finite field $\mathbb{F}_{q^{m}}$ admits a self-dual normal basis over $\mathbb{F}_{q}$ iff both $m$ and $q$ are odd or $2\mid q$ and $4\nmid m$ (\cite[ Theorem 1.4.4]{gao}). In \cite[Section 5.4]{gao} some methods to build self-dual normal bases are presented.

\begin{remark}
All linear polynomials studied in this work have their coefficients expressed in terms of normal dual bases of $\mathbb{F}_{q^{m}}$ over $\mathbb{F}_{q}$. Even so, if one wants to depict these coefficients in terms of a $\mathbb{F}_{q}$-basis of $\mathbb{F}_{q^{n}}$ it is enough to use the following fact: if $a\in \mathbb{F}_{q^{n}}$, $\{\alpha_{1},..., \alpha_{n}\}$ is an $\mathbb{F}_{q}$-basis of $\mathbb{F}_{q^{n}}$ with dual $\{\beta_{1},..., \beta_{n}\}$ , then $a=\sum_{i=1}^{n}Tr_{\mathbb{F}_{q^{n}}/\mathbb{F}_{q^{m}}}(a\cdot \beta_{i})\alpha_{i}$. 
\end{remark}

\begin{example}
 In this example, we present one linear PP $g(x)$ over $\mathbb{F}_{64}$ whose  coefficients are in $\mathbb{F}_8$, so $q=2$, $n=6$, $m=3$, and $s=2$. Let $\displaystyle{\alpha \in\mathbb{F}_{2^3}\cong\frac{\mathbb{F}_2[x]}{\langle x^3 + x^2 +1\rangle}}$ a primitive element. It is easy to check that $B=\left\{\alpha, \alpha^2 , \alpha^4 \right\}$ a $\mathbb{F}_2 -$self dual normal basis of $\mathbb{F}_{2^3}$.

Consider the $3\times 3-$binary matrices
\begin{eqnarray*}
G_0 =\begin{bmatrix}
    0 & 0 & 1 \\
    1 & 0 & 0 \\
    0 & 0 & 0
\end{bmatrix}\mbox{ and } G_1 =\begin{bmatrix}
    1 & 0 & 0 \\
    0 & 1 & 0 \\
    0 & 0 & 1
\end{bmatrix}  
\end{eqnarray*}

and  $G := G_0 + G_1 \cdot x \in M_3 \left(\mathcal{R}_{2,2}\right)$. Then, $det(G)=x$ so that $\psi (G) = g(x):= g_0 (x) + g_1 \left(x^8 \right)$, where $\displaystyle{g_i (x) = \sum_{k = 0} ^{2} a_{ik}x^{q^k}}$ and $G_{i}=\left[g_i \right]_{B}$ for $i=1,2$, is a PP. Then, by Theorem \ref{gi-coefficients}, the coefficients of $g_0 (x)$ and $g_1 (x)$ can be obtained as
\begin{eqnarray*}
\begin{bmatrix}
     a_{00}    \\
     a_{01} \\ 
     a_{02} 
\end{bmatrix}&=&\begin{bmatrix}
    \alpha   & \alpha^2 & \alpha^4\\
    \alpha^2 & \alpha^4 & \alpha\\
    \alpha^4 & \alpha & \alpha^2
\end{bmatrix}\cdot\begin{bmatrix}
     0\cdot \alpha +  1\cdot \alpha^2 +  0\cdot \alpha^4   \\
     0\cdot \alpha +  0\cdot \alpha^2 +  0\cdot \alpha^4 \\ 
     1\cdot \alpha +  0\cdot \alpha^2 +  0\cdot \alpha^4 
\end{bmatrix}\\
             &=&\begin{bmatrix}
    \alpha   & \alpha^2 & \alpha^4\\
    \alpha^2 & \alpha^4 & \alpha\\
    \alpha^4 & \alpha & \alpha^2
\end{bmatrix}\cdot \begin{bmatrix}
     \alpha^2    \\
     0 \\ 
      \alpha 
\end{bmatrix}=\begin{bmatrix}
     \alpha^6 \\
     \alpha^5 \\ 
      \alpha^5 
\end{bmatrix}\\
            &\text{and}&\\
\begin{bmatrix}
     a_{10}    \\
     a_{11} \\ 
     a_{12} 
\end{bmatrix}&=&\begin{bmatrix}
    \alpha   & \alpha^2 & \alpha^4\\
    \alpha^2 & \alpha^4 & \alpha\\
    \alpha^4 & \alpha & \alpha^2
\end{bmatrix}\cdot\begin{bmatrix}
     1\cdot \alpha +  0\cdot \alpha^2 +  0\cdot \alpha^4   \\
     0\cdot \alpha +  1\cdot \alpha^2 +  0\cdot \alpha^4 \\ 
     0\cdot \alpha +  0\cdot \alpha^2 +  1\cdot \alpha^4 
\end{bmatrix}\\
             &=&\begin{bmatrix}
    \alpha   & \alpha^2 & \alpha^4\\
    \alpha^2 & \alpha^4 & \alpha\\
    \alpha^4 & \alpha & \alpha^2
\end{bmatrix}\cdot\begin{bmatrix}
   \alpha \\
     \alpha^2 \\ 
      \alpha^4 
\end{bmatrix}=\begin{bmatrix}
     1 \\
     0 \\ 
     0 
\end{bmatrix},\\
\end{eqnarray*}

so that $g_0 (x) =\alpha^5 x^4 +\alpha^5 x^2 +  \alpha^6 x$ and $g_1 (x) = x$. Therefore, $g(x):= g_0 (x) + g_1 \left(x^8 \right)=x^{8}+\alpha^5 x^4 +\alpha^5 x^2 +  \alpha^6 x$ is a PP in $\mathbb{F}_{64}$ with coefficients in $\mathbb{F}_8$ .\\

%If $\beta$ is a primitive element of $\displaystyle{ \mathbb{F}_{64} \cong \frac{\mathbb{F}_2 [x]}{\left\langle x^6 +x +1 \right\rangle }}$, then the subfield $\mathbb{F}_8$ can be described by $\mathbb{F}_8 ^{*} = \left\langle \beta^9 \right\rangle$. Therefore, the respective linear permutation obtained by the polynomials $g_0 (x)$ and $g_1(x)$ is $g(x)= x^8 + \beta^{45}x^4 + \beta^{45} x^2 + \beta^{54} x$.
\end{example}

Since Theorem \ref{gi-coefficients} provides a way to compute the coefficients of the terms $g_{i}$'s that appear in the description of $\psi$  (in \ref{mainiso}), in terms of dual normal basis, we are ready to work with explicit computations understanding how the entries of a given matrix $A\in M_{m}\left(\mathcal{R}_{q,s}\right)$ affect the coefficients of the linear polynomial $\psi (A)\in R_m$ (as in Theorem  \ref{elem}).

\section{Permutation polynomials arising from elementary matrices in  $GL_m \left(\mathcal{R}_{q,s}\right)$}\label{S2}

In this section, three distinct methods to compute PPs in $R_{m}$ are presented. We start remembering the definition of the general linear group of degree $m$ over $\mathcal{R}_{q,s}$ and some of its most important subgroups. \\

 Let $U(\mathcal{R}_{q,s})$ denote the unity group of $\mathcal{R}_{q,s}$. The general linear group of degree $m$ over the ring $\mathcal{R}_{q,s}$, is the group    \[GL_{m}\left(\mathcal{R}_{q,s}\right):=\{A\in M_m \left(\mathcal{R}_{q,s}\right): A \text{ is invertible } \}=\{A\in M_m \left(\mathcal{R}_{q,s}\right): det(A) \in U(\mathcal{R}_{q,s}) \}.\] The special linear group of degree $m$ over the ring $\mathcal{R}_{q,s}$, is the subgroup of  $GL_m \left(\mathcal{R}_{q,s}\right)$ given by \[SL_{m}\left(\mathcal{R}_{q,s}\right)=\{A\in GL_m \left(\mathcal{R}_{q,s}\right): det(A)=1 \}.\] The Borel subgroup of  $GL_m \left(\mathcal{R}_{q,s}\right)$ is defined as \[B_{m}\left(\mathcal{R}_{q,s}\right)=\{A\in GL_m \left(\mathcal{R}_{q,s}\right):  A \text{ is an upper triangular matrix} \}.\]

Let $c(x) \in \mathcal{R}_{q,s}$ and $j,k\in \{1,...,m\}$ with $j\neq k$. The transvection $\chi_{jk}(c(x))\in M_{m}\left(\mathcal{R}_{q,s}\right)$ will be the matrix having $c(x)$ in the $jk$-entry, 1 in all diagonal entries, and $0$ elsewhere.  If $u(x)\in U(\mathcal{R}_{q,s})$, the row multiplication $D_{l}(u(x))\in M_{m}\left(\mathcal{R}_{q,s}\right)$ will be the matrix having $u(x)$ in the $ll$-entry, 1 in all the remaining diagonal entries, and $0$ elsewhere. Transvections and row multiplication matrices will be called elementary matrices (some authors use this term to refer to the group generated by transvections \cite{HOM, suslin}, while in linear algebra textbooks, this term is commonly used to refer to matrices obtained from the identity matrix by a single elementary row operation). Corollary \ref{csuper}  states that elementary matrices generate $ GL_{m}\left(\mathcal{R}_{q,s}\right)$, and transvections generate $SL_{m}\left(\mathcal{R}_{q,s}\right)$.

\begin{theorem}\label{super}
Let $R$ be a finite commutative ring with unity. Then :
\begin{enumerate}
     \item Transvections generate $SL_{m}\left(R\right)$

     \item Elementary matrices generate $ GL_{m}\left(R \right)$
\end{enumerate}    
\end{theorem}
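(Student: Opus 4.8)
The plan is to reduce both statements to the case of a finite \emph{local} ring and then to carry out Gaussian elimination, exploiting the fact that over a local ring every column of an invertible matrix contains a unit. I would begin with the structural observation that a finite commutative ring with unity is Artinian, and hence splits as a finite direct product $R\cong R_{1}\times\cdots\times R_{t}$ of finite local rings. The induced ring isomorphism $M_{m}(R)\cong\prod_{i=1}^{t}M_{m}(R_{i})$ sends a matrix to the tuple of its componentwise reductions, and since the determinant is computed componentwise it restricts to group isomorphisms $GL_{m}(R)\cong\prod_{i}GL_{m}(R_{i})$ and $SL_{m}(R)\cong\prod_{i}SL_{m}(R_{i})$.

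Next I would check that this reduction is compatible with the elementary generators. Under the identification above a transvection $\chi_{jk}(c)$ with $c=(c_{1},\dots,c_{t})$ becomes $(\chi_{jk}(c_{1}),\dots,\chi_{jk}(c_{t}))$, and a row multiplication $D_{l}(u)$ with $u=(u_{1},\dots,u_{t})$ becomes $(D_{l}(u_{1}),\dots,D_{l}(u_{t}))$. Choosing $c$ (resp.\ $u$) supported on a single coordinate produces an elementary matrix over $R$ that acts nontrivially on one factor and as the identity on the others. Consequently the subgroup of $SL_{m}(R)$ (resp.\ $GL_{m}(R)$) generated by transvections (resp.\ elementary matrices) contains each embedded factor $SL_{m}(R_{i})$ (resp.\ $GL_{m}(R_{i})$), and therefore the whole product once the local case is established. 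So it suffices to prove both claims when $R$ is local.

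Assume then that $R$ is local with maximal ideal $\mathfrak{m}$ and let $A\in GL_{m}(R)$. Because $\det A$ is a unit, expanding it along the first column shows that the entries $a_{i1}$ cannot all lie in $\mathfrak{m}$, so some $a_{i1}$ is a unit. Adding a suitable row to the first one (a transvection $\chi_{1i}(1)$) and using that in a local ring a unit plus a non-unit is again a unit, I would place a unit in the $(1,1)$ position, and then clear the remainder of the first column and first row by multiplying $A$ on the left and right by transvections $\chi_{i1}(\cdot)$ and $\chi_{1j}(\cdot)$; this is legitimate since inverses of transvections are transvections, so $A$ is recovered as a product of transvections. Iterating on the lower $(m-1)\times(m-1)$ block reduces $A$, modulo transvections, to a diagonal matrix $\mathrm{diag}(u_{1},\dots,u_{m})$ whose entries are units. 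For part (2) this matrix is visibly the product $D_{1}(u_{1})\cdots D_{m}(u_{m})$ of row multiplications, so $A$ is a product of elementary matrices. For part (1) the reducing transvections have determinant $1$, whence $\prod_{i}u_{i}=\det A=1$, and a telescoping decomposition writes $\mathrm{diag}(u_{1},\dots,u_{m})$ as a product of $2\times2$ blocks $\mathrm{diag}(v,v^{-1})$, each of which is a product of transvections via the classical identity
\[
\mathrm{diag}(v,v^{-1})=\chi_{12}(v)\,\chi_{21}(-v^{-1})\,\chi_{12}(v)\,\chi_{12}(-1)\,\chi_{21}(1)\,\chi_{12}(-1).
\]

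The main obstacle is the Gaussian elimination over a local ring, and in particular securing a unit pivot at every stage: this is precisely the step that fails over an arbitrary commutative ring, and the passage to local factors (hence the role of the finiteness hypothesis, through the Artinian decomposition) is what makes it available. The only other point needing care is the treatment of the determinant-one diagonal matrices in part (1), which the displayed $\mathrm{diag}(v,v^{-1})$ identity together with the telescoping fully resolves.
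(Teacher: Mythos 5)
Your proof is correct, but it takes a genuinely different route from the paper's. The paper disposes of the theorem in three lines: a finite commutative ring has finitely many maximal ideals, hence is semi-local, and both claims are then quoted directly from \cite[Theorem 4.3.9]{HOM}, which asserts exactly that transvections generate $SL_{m}$ and elementary matrices generate $GL_{m}$ over any semi-local ring. You instead give a self-contained argument: decompose the finite (hence Artinian) commutative ring as a product of finite local rings, observe that the generation problem splits across the factors because transvections and row multiplications supported on a single coordinate act as the identity on the others, and then run Gaussian elimination over a local ring, where invertibility of $\det A$ forces a unit in the first column, the unit-plus-nonunit trick secures the pivot, and the Whitehead-type identity writes $\mathrm{diag}(v,v^{-1})$ as a product of transvections (I checked the displayed identity; it is correct). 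The trade-off is clear: the paper's proof is shorter but rests entirely on an external reference, while yours is longer but elementary and makes visible exactly where finiteness enters, namely through the local decomposition and the availability of unit pivots. One caveat worth recording: your argument is marginally less general than the cited theorem, since a commutative semi-local ring need not split as a product of local rings, but it fully covers the finite case needed here.
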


\begin{proof}
It is well-known that, for commutative rings with unity, it is equivalent having a finite number of maximal ideals and being a semi-local ring. Since $R$ is finite, it has a finite number of maximal ideals and so $R$ is a semi-local ring. Hence, by  \cite[Theorem 4.3.9]{HOM}, transvections generate the group $SL_{m}\left(R\right)$ and the elementary matrices generate $ GL_{m}\left(R \right)$.   
\end{proof}

Theorem \ref{super} (part $1$) is not true in general, i.e., there exist rings $R$ such that $SL_{m}\left(R\right)$ is not generated by transvections (see, for example, \cite[p. 173]{HOM} or \cite{suslin} ).\\

\begin{corollary}\label{csuper}
The following statements hold true:
\begin{enumerate}
     \item Transvections generate $SL_{m}\left(\mathcal{R}_{q,s}\right)$

     \item Elementary matrices generate $ GL_{m}\left(\mathcal{R}_{q,s} \right)$
\end{enumerate} 
\end{corollary}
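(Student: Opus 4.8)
The plan is to observe that Corollary~\ref{csuper} is an immediate specialization of Theorem~\ref{super}, so the entire task reduces to verifying that $\mathcal{R}_{q,s}$ satisfies the hypothesis of that theorem, namely that it is a finite commutative ring with unity. First I would recall that $\mathcal{R}_{q,s} = \mathbb{F}_{q}[x]/\langle x^{s}-1\rangle$ is a quotient of the commutative polynomial ring $\mathbb{F}_{q}[x]$, hence is itself a commutative ring with unity (the image of $1$). Second I would note that it is finite: as an $\mathbb{F}_{q}$-vector space it has dimension $s$ (a basis being $1, x, \ldots, x^{s-1}$), so $|\mathcal{R}_{q,s}| = q^{s} < \infty$.

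With these two observations in hand, the proof is a one-line invocation. Applying Theorem~\ref{super} with $R = \mathcal{R}_{q,s}$ yields directly that transvections generate $SL_{m}(\mathcal{R}_{q,s})$ (part~1) and that elementary matrices generate $GL_{m}(\mathcal{R}_{q,s})$ (part~2), which are exactly the two assertions of the corollary. No additional structural facts about $\mathcal{R}_{q,s}$ (such as its decomposition into local rings via the factorization of $x^{s}-1$) are needed, since finiteness already guarantees the semi-local property that Theorem~\ref{super} exploits through \cite[Theorem 4.3.9]{HOM}.

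There is essentially no obstacle here, which is precisely why this statement is phrased as a corollary rather than a theorem: the only content is checking the hypothesis, and both the commutativity and the finiteness of $\mathcal{R}_{q,s}$ are transparent from its definition as a finite-dimensional quotient algebra over a finite field. The one point worth stating explicitly, to make the deduction airtight, is the finiteness count $|\mathcal{R}_{q,s}| = q^{s}$, since Theorem~\ref{super} uses finiteness (via the equivalence of ``finitely many maximal ideals'' and ``semi-local'') as its single nontrivial input.
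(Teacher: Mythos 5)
Your proof is correct and matches the paper's (implicit) argument exactly: the corollary follows by applying Theorem~\ref{super} to $R=\mathcal{R}_{q,s}$, which is a finite commutative ring with unity since $|\mathcal{R}_{q,s}|=q^{s}$. The paper treats this as immediate and gives no separate proof, so your explicit verification of the hypotheses is, if anything, slightly more careful than the source.
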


Since any invertible matrix in $GL_{m}(\mathcal{R}_{q,s})$ is the product of elementary matrices (by Corollary \ref{csuper}),  any PP in $R_{m}$ may be constructed as the image of this kind of product under $\psi$. To do so, one could solve the problem of  describing the image of elementary matrices under $\psi$ and then use the fact that it is multiplicative. Let $j,k\in \{1,...,m\}$ and $O_{jk}\in M_{m}(\mathcal{R}_{q,s})$ denote the matrix having $1$ in the $jk$-entry and $0$ elsewhere.  Since every  matrix in $M_{m}(\mathcal{R}_{q,s})$ is the sum of matrices in the set $\mathcal{O}:=\{O_{jk}c(x): c(x)\in \mathcal{R}_{q,s}  \wedge j,k\in \{1,...,m\} \}$, one could solve the problem of  describing the image of an arbitrary matrix in $M_{m}(\mathcal{R}_{q,s})$, by describing the image under $\psi$ of an element of the set $\mathcal{O}$ and then use the fact that it is additive.  In conclusion, since $\psi$ is an isomorphism of $\mathbb{F}_{q}$-algebras,  we have three alternatives for computing the image under $\psi$  of an element of $GL_{m}(\mathcal{R}_{q,s})$, as a ``product'' (remember the product in $R_{m}$ is the composition) of images of elementary matrices; as a sum of images of matrices in the set $\mathcal{O}$; or a combination of both. Theorem \ref{elem} will be of use to do that.\\

\begin{theorem} \label{elem}
 
  Let $\alpha \in \mathbb{F}_{q^{m}}$ be an $\mathbb{F}_{q}$-normal element, $B=\left\{\alpha, \alpha^{q},..., \alpha^{q^{m-1}}\right\}$ be the normal basis determined by $\alpha$, and $B'=\left\{u, u^{q},..., u^{q^{m-1}}\right \}$ the dual basis of $B$. Let $\displaystyle{c(x)=\sum_{i=0}^{s-1}c_{i}x^{i}\in \mathcal{R}_{q,s}}$. Then the following holds:
 \begin{enumerate}
\item  The set $\{O_{jk}x^{i}: j,k\in \{1,...,m\} \wedge  i\in \{0,...,s-1\}\}$ is an $\mathbb{F}_{q}$-basis of $M_{m}(\mathcal{R}_{q,s})$. In addition,

\[\psi\left(O_{jk}c(x)\right)= \sum_{i=0}^{s-1}\sum_{r=0}^{m-1}c_{i}\left(\alpha^{q^{j}}u^{q^{k+r}}\right)x^{q^{m\cdot i +r}}.\]
for all $j,k\in \{1,...,m\}$. In particular, if $B$ is a self-dual normal basis of $\mathbb{F}_{q^{m}}$ over $\mathbb{F}_{q}$, then

\[\psi\left(O_{jk}c(x)\right)= \sum_{i=0}^{s-1}\sum_{r=0}^{m-1}c_{i}\left(\alpha^{q^{j}+q^{k+r}}\right)x^{q^{m\cdot i +r}}.\]
for all $j,k\in \{1,...,m\}$.\\

\item  If $j,k\in \{1,...,m\}$ are distinct, then \[\psi\left(\chi_{jk}( c(x))\right)=x+ \sum_{i=0}^{s-1}\sum_{r=0}^{m-1}c_{i}\left(\alpha^{q^{j}}u^{q^{k+r}}\right)x^{q^{m\cdot i +r}}.\]
In particular, if $B$ is a self-dual normal basis of $\mathbb{F}_{q^{m}}$ over $\mathbb{F}_{q}$ , then

\[\psi\left(\chi_{jk}( c(x))\right)=x+ \sum_{i=0}^{s-1}\sum_{r=0}^{m-1}c_{i}\left(\alpha^{q^{j}+q^{k+r}}\right)x^{q^{m\cdot i +r}}.\]

\item If $\displaystyle{t(x)=\sum_{i=0}^{s-1}t_{i}x^{i}}\in \mathcal{R}_{q,s}$ is such that $\displaystyle{c(x)=t(x)+1\in U(\mathcal{R}_{q,s})}$,  and  $l\in \{1,...,m\}$, then \[\psi\left(D_{l}(c(x))\right)=x+ \sum_{i=0}^{s-1}\sum_{r=0}^{m-1}t_{i}\left(\alpha^{q^{l}}u^{q^{l+r}}\right)x^{q^{m\cdot i +r}}.\]
In particular, if $B$ is a self-dual normal basis of $\mathbb{F}_{q^{m}}$ over $\mathbb{F}_{q}$ , then

\[\psi\left(D_{l}(c(x))\right)=x+ \sum_{i=0}^{s-1}\sum_{r=0}^{m-1}t_{i}\left(\alpha^{q^{l}(1+q^{r})}\right)x^{q^{m\cdot i +r}}.\]

\end{enumerate}
\end{theorem}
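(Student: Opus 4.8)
The plan is to reduce all three parts to a single computation, namely the explicit form of $\psi(O_{jk}c(x))$ in Part~1, and then obtain Parts~2 and~3 from the $\mathbb{F}_q$-linearity of $\psi$. For the basis claim in Part~1, I would argue that $\mathcal{R}_{q,s}$ is a free $\mathbb{F}_q$-module with basis $\{1,x,\ldots,x^{s-1}\}$ and that the matrix units $O_{jk}$ form an $\mathcal{R}_{q,s}$-module basis of $M_m(\mathcal{R}_{q,s})$; hence the products $O_{jk}x^i$ span over $\mathbb{F}_q$, while linear independence follows entry-by-entry from the independence of $\{x^i\}$ in $\mathcal{R}_{q,s}$. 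A dimension count ($m^2 s$ on both sides) serves as a sanity check.

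For the main formula I would first decompose $O_{jk}c(x)=\sum_{i=0}^{s-1}(c_i O_{jk})x^i$, so that in the expansion $G=\sum_i G_i x^i$ from \eqref{mainiso} the coefficient matrices are $G_i=c_i O_{jk}$. Each $G_i$ represents, in the basis $B$, the $\mathbb{F}_q$-linear map $g_i$ that sends the $k$-th basis vector to $c_i$ times the $j$-th basis vector and annihilates all others, i.e. $g_i(\alpha^{q^l})=c_i\,\delta_{lk}\,\alpha^{q^j}$. Feeding these values into Theorem~\ref{gi-coefficients} lets me read off the coefficients $a_{ir}$ of $g_i(x)=\sum_r a_{ir}x^{q^r}$: only the term $l=k$ survives, giving $a_{ir}=c_i\,\alpha^{q^j}u^{q^{k+r}}$. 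Substituting into $\psi(O_{jk}c(x))=\sum_i g_i(x^{q^{mi}})=\sum_i\sum_r a_{ir}x^{q^{mi+r}}$ yields the stated formula, and the self-dual specialization follows by replacing $u$ with $\alpha$ (a self-dual normal basis satisfies $u=\alpha$, whence $u^{q^{k+r}}=\alpha^{q^{k+r}}$ and $\alpha^{q^j}u^{q^{k+r}}=\alpha^{q^j+q^{k+r}}$).

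Parts~2 and~3 then drop out quickly. I would write $\chi_{jk}(c(x))=I_m+O_{jk}c(x)$ and $D_l(c(x))=I_m+O_{ll}t(x)$ (using $c(x)=t(x)+1$), observe that $\psi(I_m)=x$ because $\psi$ is an algebra isomorphism carrying the identity of $M_m(\mathcal{R}_{q,s})$ to the identity of $R_m$ (which is the polynomial $x$), and then invoke the additivity of $\psi$ together with Part~1 (taking $j=k=l$ and $c=t$ in the case of $D_l$, so that $\alpha^{q^l}u^{q^{l+r}}=\alpha^{q^l(1+q^r)}$ in the self-dual case).

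The step I expect to require the most care is matching the indexing between the matrix positions $j,k\in\{1,\ldots,m\}$ and the normal basis $B$ as it is indexed in Theorem~\ref{gi-coefficients}. One must fix the convention $b_p=\alpha^{q^p}$ (so that $\alpha^{q^j}$ appears rather than $\alpha^{q^{j-1}}$) and read all exponents modulo $m$ via $\alpha^{q^m}=\alpha$ and $u^{q^m}=u$; with the wrong convention the formula is off by a cyclic shift. Everything else is routine bookkeeping.
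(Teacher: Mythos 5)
Your proposal is correct and follows essentially the same route as the paper's proof: the basis claim via $\{x^i\}$ being an $\mathbb{F}_q$-basis of $\mathcal{R}_{q,s}$ and $\{O_{jk}\}$ an $\mathcal{R}_{q,s}$-basis of $M_m(\mathcal{R}_{q,s})$, the core formula by reading off $g_i(\alpha^{q^l})=c_i\delta_{lk}\alpha^{q^j}$ and applying Theorem~\ref{gi-coefficients}, and Parts 2 and 3 from $\chi_{jk}(c(x))=I_m+O_{jk}c(x)$, $D_l(c(x))=I_m+O_{ll}t(x)$, $\psi(I_m)=x$, and additivity. The only cosmetic difference is that the paper first computes $\psi(O_{jk}x^i)$ and then sums with coefficients $c_i$, while you absorb $c_i$ into $G_i$ from the start; your remark about the cyclic indexing convention $\alpha^{q^m}=\alpha$ is a fair observation about a point the paper glosses over.
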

\begin{proof}
\begin{enumerate}
\item Since the set $\{x^{i}: i\in \{1,...,s-1\}\}$ is an $\mathbb{F}_{q}$-basis of $\mathcal{R}_{q,s}$ and $\{O_{jk}: j,k\in \{1,...,m\}\}$ is an $\mathcal{R}_{q,s}$-basis of $M_{m}(\mathcal{R}_{q,s})$ (as free $\mathcal{R}_{q,s}$-module), then  $\{O_{jk}x^{i}: j,k\in \{1,...,m\} \wedge  i\in \{1,...,s-1\}\}$ is an $\mathbb{F}_{q}$-basis of $M_{m}(\mathcal{R}_{q,s})$. In addition, by definition of $\psi$, $\psi(O_{jk}x^{i})=g_{i}\left(x^{q^{m\cdot i}}\right)$ where $\displaystyle{g_{i}(x)=\sum_{r=0}^{m-1}a_{ir}x^{q^{r}}}$ and 

\[[g_{i}]_{B}=\left[[g_{i}(\alpha)]_{B}\left[g_{i}\left(\alpha^{q}\right)\right]_{B}\cdots \left[g_{i}\left(\alpha^{q^{m-1}}\right)\right]_{B}\right]=O_{jk}
\]
for $i=0,...,s-1$. Implying that the only non-zero entry of $[g_{i}]_{B}$, which is equal to $1$, is in the $j$-th position of the $k$-th column, i.e., the $j$-th position of  $[g_{i}(\alpha^{q^{k}})]_{B}$ is $1$ for $i=0,...,s-1$. Thus $g_{i}\left(\alpha^{q^{k}}\right)=\alpha^{q^{j}}$ and $g_{i}\left(\alpha^{q^{r}}\right)=0$ if $r\neq k$, for $i=0,...,s-1$. Hence, by Theorem \ref{gi-coefficients}
\begin{eqnarray*}
\begin{bmatrix}
a_{i0} \\ a_{i1} \\ \vdots \\ a_{im-1}
\end{bmatrix}&=& 
\begin{bmatrix}
u & u^{q} & \cdots & u^{q^{m-1}} \\
u^{q} & u^{q^{2}} & \cdots & u \\
\vdots  & \vdots  & \ddots & \vdots  \\
u^{q^{m-1}} &u & \cdots & u^{q^{m-2}} 
\end{bmatrix}
\begin{bmatrix}
g_{i}(\alpha) \\ g_{i}(\alpha^{q})\\ \vdots \\ g_{i}(\alpha^{q^{k}}) \\ \vdots \\ g_{i}(\alpha^{q^{m-1}}) 
\end{bmatrix}\\
             &=&\begin{bmatrix}
u & u^{q} & \cdots & u^{q^{m-1}} \\
u^{q} & u^{q^{2}} & \cdots & u \\
\vdots  & \vdots  & \ddots & \vdots  \\
u^{q^{m-1}} &u & \cdots & u^{q^{m-2}} 
\end{bmatrix}
\begin{bmatrix}
0 \\  \vdots \\ \alpha^{q^{j}} \\ \vdots \\0
\end{bmatrix}\\
            &=& \alpha^{q^{j}}\cdot \begin{bmatrix}
u^{q^{k}} \\ u^{q^{k+1}} \\ \vdots \\ u^{q^{k+(m-1)}} 
\end{bmatrix}= \begin{bmatrix}
\alpha^{q^{j}}u^{q^{k}} \\\alpha^{q^{j}} u^{q^{k+1}} \\ \vdots \\ \alpha^{q^{j}}u^{q^{k+(m-1)}} 
\end{bmatrix}
\end{eqnarray*} 
for $i=0,...,s-1$. Hence $\displaystyle{g_{i}(x)=\sum_{r=0}^{m-1}a_{ir}x^{q^{r}}=\sum_{r=0}^{m-1}\left(\alpha^{q^{j}}u^{q^{k+r}}\right)x^{q^{r}}}$ and so 
\begin{eqnarray*}
\psi\left(O_{jk}x^{i}\right)&=&g_{i}\left(x^{q^{m\cdot i}}\right)=\sum_{r=0}^{m-1}\left(\alpha^{q^{j}}u^{q^{k+r}}\right)\left(x^{q^{m\cdot i}}\right)^{q^{r}}                                                    \\
                            &=&\sum_{r=0}^{m-1}\left(\alpha^{q^{j}}u^{q^{k+r}}\right)x^{q^{m\cdot i + r}}.
\end{eqnarray*}   

Thus, since
\begin{equation*}
O_{jk}c(x)=c_{0}O_{jk}+c_{1}O_{jk}x+\cdots + c_{s-1}O_{jk}x^{s-1}
\end{equation*}

 then 
\begin{eqnarray*}
\psi(O_{jk}c(x))&=&\psi(c_{0}O_{jk}+c_{1}O_{jk}x+\cdots + c_{s-1}O_{jk}x^{s-1})\nonumber\\
                             &=&c_{0}\psi(O_{jk})+c_{1}\psi(O_{jk}x)+\cdots + c_{s-1}\psi(O_{jk}x^{s-1}) \\
                             &=& \sum_{i=0}^{s-1} c_{i} \left( \psi(O_{jk}x^{i})\right)
                             =  \sum_{i=0}^{s-1} c_{i} \left( \sum_{r=0}^{m-1}\left(\alpha^{q^{j}}u^{q^{k+r}}\right)x^{q^{m\cdot i +r}}\right) \\
                             &=& \sum_{i=0}^{s-1}\sum_{r=0}^{m-1}c_{i}\left(\alpha^{q^{j}}u^{q^{k+r}}\right)x^{q^{m\cdot i +r}},  
\end{eqnarray*}

In particular,  if $B$ is a self-dual normal basis of $\mathbb{F}_{q^{m}}$ over $\mathbb{F}_{q}$, $\alpha=u$ and so \[\psi(c(x)O_{jk})=\sum_{i=0}^{s-1}\sum_{r=0}^{m-1}c_{i}\left(\alpha^{q^{j}+q^{k+r}}\right)x^{q^{m\cdot i +r}}.\]

\item Let $j,k\in \{1,...,m\}$ distinct and $I_{m}\in M_{m}(\mathcal{R}_{q,s})$ denote the identity matrix. Since $\chi_{jk}(c(x))=I_{m} + O_{jk}c(x)$, then 

\begin{eqnarray*}
\psi(\chi_{jk}(c(x)))&=&\psi(I_{m} + O_{jk}c(x))
                             =\psi(I_{m}) + \psi(O_{jk}c(x))\\
                     &=& x+ \sum_{i=0}^{s-1}\sum_{r=0}^{m-1}c_{i}\left(\alpha^{q^{j}}u^{q^{k+r}}\right)x^{q^{m\cdot i +r}}
\end{eqnarray*}

where the last equality is by part $1$. In particular, if $B$ is a self-dual normal basis of $\mathbb{F}_{q^{m}}$ over $\mathbb{F}_{q}$, $\alpha=u$ so that
\[\
\psi(\chi_{jk}(c(x)))=x+ \sum_{i=0}^{s-1}\sum_{r=0}^{m-1}c_{i}\left(\alpha^{q^{j}+q^{k+r}}\right)x^{q^{m\cdot i +r}}           
\]

\item  Let $\displaystyle{t(x)=\sum_{i=0}^{s-1}t_{i}x^{i}}\in \mathcal{R}_{q,s}$ is such that $\displaystyle{c(x)=t(x)+1\in U(\mathcal{R}_{q,s})}$,  and  $l\in \{1,...,m\}$. Since $D_{l}(c(x))=I_{m} + O_{ll}t(x)$, then

\begin{eqnarray*}
\psi \left(D_{l}(c(x))\right)&=& \psi \left(I_{m} + O_{ll}t(x)\right) = \psi(I_{m}) + \psi(O_{ll}t(x))\\
                                    &=&x+ \sum_{i=0}^{s-1}\sum_{r=0}^{m-1}t_{i}\left(\alpha^{q^{l}}u^{q^{l+r}}\right)x^{q^{m\cdot i +r}}
\end{eqnarray*}
 where the last equality is by part $1$. In particular, if $B$ is a self-dual normal basis of $\mathbb{F}_{q^{m}}$ over $\mathbb{F}_{q}$, $\alpha=u$ and so

\[\psi\left(D_{l}(c(x))\right)=x+ \sum_{i=0}^{s-1}\sum_{r=0}^{m-1}t_{i}\left(\alpha^{q^{l}(1+q^{r})}\right)x^{q^{m\cdot i +r}}.\]

\end{enumerate}  
\end{proof}

\section{Families of permutation polynomials in $R_{m}$}\label{S3}

In this section, we introduce and characterize several families of linear permutations in $R_{m}$. A Special permutation polynomial (or SPP) will be an element of $\psi(SL_{m}\left(\mathcal{R}_{q,s}\right))$. A Borel permutation polynomial (or BPP) will be an element of $\psi(B_{m}\left(\mathcal{R}_{q,s}\right))$, and a special Borel permutation polynomial (or SBPP) will be an element of $\psi(SL_{m}\left(\mathcal{R}_{q,s}\right) \cap B_{m}\left(\mathcal{R}_{q,s}\right) )$.  Let $D_{m}(\mathcal{R}_{q,s})$ denote the subgroup of $GL_{m}(\mathcal{R}_{q,s})$ formed by the diagonal invertible matrices.  A diagonal permutation polynomial (DPP)  will be an element of $\psi(D_{m}\left(\mathcal{R}_{q,s}\right))$, and a special diagonal permutation polynomial (or SDPP) will be an element of $\psi(SL_{m}\left(\mathcal{R}_{q,s}\right) \cap D_{m}\left(\mathcal{R}_{q,s}\right) )$. Theorems \ref{celem} and \ref{PP} give  characterizations of PPs, SPPs, BPPs, SBPPs, DPPs, and SDPPs.\\

\begin{theorem}\label{celem}

Let  $f(x)\in R_{m}$. Then:

\begin{enumerate}

\item $f(x)$ is a SPP iff there exists $t\in \mathbb{Z}_{>0}$ and 

 $j_{d}, k_{d}\in \{1,...,m\}$ with $j_{d}\neq k_{d}$ for all $d\in \{1,...,t\}$ such that 

\[f(x)=\bigcirc_{d=1}^{t}\left(x+ \sum_{i=0}^{s-1}\sum_{r=0}^{m-1}c_{di}\left(\alpha^{q^{j_{d}}}u^{q^{k_{d}+r}}\right)x^{q^{m\cdot i +r}} \right)\]

where $\bigcirc$ denotes the composition of functions and $c_{di}$ is an arbitrary element in $\mathbb{F}_{q}$ for all $d\in \{1,...,t\}$ and $i\in \{1,...,s-1\}$.\\ %In particular, if $B$ is a self-dual basis
%\[f(x)=\bigcirc_{d=1}^{t}\left(x+ \sum_{i=0}^{s-1}\sum_{r=0}^{m-1}c_{di}%\left(\alpha^{q^{j_{d}}+q^{k_{d}+r}}\right)x^{q^{m\cdot i +r}} \right)\]
%
%$c_{di}$ is an arbirary element in $\mathbb{F}_{q}$ for all $d\in \{1,...,t\}$ %and $i\in \{0,...,s-1\}$.\\

\item  $f(x)$ is a PP iff there exists an SPP $h(x)$, a polynomial $t(x)=\sum_{i=0}^{s-1}t_{i}x^{i}\in \mathcal{R}_{q,s}$ such that $t(x)+1 \in U\left(\mathcal{R}_{q,s}\right)$, and

\[f(x)=h(x)+ \sum_{i=0}^{s-1}\sum_{r=0}^{m-1}t_{i}\left(\alpha^{q}u^{q^{r}}\right)(h(x))^{q^{m\cdot i +r}} .\]

%In particular, if $B$ is a self-dual basis
%
%\[f(x)=h(x)+ \sum_{i=0}^{s-1}\sum_{r=0}^{m-1}t_{i}\left(\alpha^{q^{r+1}}\right)%(h(x))^{q^{m\cdot i +r}} .\]

\end{enumerate}

\end{theorem}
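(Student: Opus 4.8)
The plan is to derive both characterizations from the generation results in Corollary~\ref{csuper} together with the explicit images computed in Theorem~\ref{elem}, exploiting throughout that $\psi$ is an isomorphism of $\mathbb{F}_{q}$-algebras, so it carries products in $M_{m}(\mathcal{R}_{q,s})$ to compositions in $R_{m}$.

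For part 1, I would prove both implications via the multiplicativity of $\psi$. For the forward direction, suppose $f$ is an SPP, so $f=\psi(A)$ for some $A\in SL_{m}(\mathcal{R}_{q,s})$. By Corollary~\ref{csuper}(1), $A$ is a product of transvections, say $A=\prod_{d=1}^{t}\chi_{j_{d}k_{d}}(c_{d}(x))$ with $j_{d}\neq k_{d}$ and $c_{d}(x)=\sum_{i=0}^{s-1}c_{di}x^{i}$. Since $\psi$ sends matrix products to compositions, $f=\psi(A)=\bigcirc_{d=1}^{t}\psi(\chi_{j_{d}k_{d}}(c_{d}(x)))$, and inserting the explicit form of Theorem~\ref{elem}(2) for each factor yields exactly the stated expression. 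For the converse, an $f$ written in that form is, factor by factor, the image under $\psi$ of a transvection $\chi_{j_{d}k_{d}}(c_{d}(x))$, so $f=\psi\big(\prod_{d=1}^{t}\chi_{j_{d}k_{d}}(c_{d}(x))\big)$. Each transvection has determinant $1$, hence the product lies in $SL_{m}(\mathcal{R}_{q,s})$, and therefore $f\in\psi(SL_{m}(\mathcal{R}_{q,s}))$ is an SPP.

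For part 2, the key idea is to peel off a single row multiplication to reduce to the special case. In the forward direction, if $f=\psi(A)$ with $A\in GL_{m}(\mathcal{R}_{q,s})$, set $u(x)=\det(A)\in U(\mathcal{R}_{q,s})$ and $t(x)=u(x)-1$, so $c(x):=t(x)+1=u(x)\in U(\mathcal{R}_{q,s})$. Then $D_{1}(u(x))^{-1}A$ has determinant $u(x)^{-1}\det(A)=1$, so it equals a matrix $H\in SL_{m}(\mathcal{R}_{q,s})$ and $h:=\psi(H)$ is an SPP. Writing $A=D_{1}(u(x))\cdot H$ and using multiplicativity gives $f=\psi(D_{1}(u(x)))\circ h$; since composition in $R_{m}$ is substitution of linearized polynomials, inserting $h(x)$ into the explicit expression of Theorem~\ref{elem}(3) for $\psi(D_{1}(u(x)))$ produces the claimed formula. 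The converse reverses this: given an SPP $h=\psi(H)$ and $t(x)$ with $t(x)+1\in U(\mathcal{R}_{q,s})$, the displayed $f$ equals $\psi(D_{1}(t(x)+1))\circ h=\psi\big(D_{1}(t(x)+1)\cdot H\big)$, and $D_{1}(t(x)+1)\cdot H\in GL_{m}(\mathcal{R}_{q,s})$ because $D_{1}(t(x)+1)$ is invertible and $H\in SL_{m}(\mathcal{R}_{q,s})$; hence $f$ is a PP.

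The main obstacle I anticipate is bookkeeping rather than conceptual. One must keep the composition order consistent with the order of matrix multiplication under $\psi$, and match the index $l$ (hence the exponents of $\alpha$ and $u$) in Theorem~\ref{elem}(3) to the coefficient $\alpha^{q}u^{q^{r}}$ appearing in the statement, remembering that exponents of $q$ are read modulo $m$ since $\alpha,u\in\mathbb{F}_{q^{m}}$. One also has to justify that substituting the SPP $h(x)$ into $\psi(D_{1}(u(x)))$ is legitimate, i.e.\ that composition of the associated $\mathbb{F}_{q}$-linear maps coincides with formal substitution of linearized polynomials modulo $x^{q^{n}}-x$.
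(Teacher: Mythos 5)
Your proposal is correct and follows essentially the same route as the paper: part 1 via Corollary~\ref{csuper}(1), multiplicativity of $\psi$, and Theorem~\ref{elem}(2); part 2 via the factorization $A=D_{1}(\det A)\cdot\bigl(D_{1}(\det A)^{-1}A\bigr)$ and Theorem~\ref{elem}(3). Your closing remark about reconciling the exponent $\alpha^{q^{l}}u^{q^{l+r}}$ from Theorem~\ref{elem}(3) at $l=1$ with the coefficient $\alpha^{q}u^{q^{r}}$ in the statement is a legitimate bookkeeping point (the indices only agree up to reading exponents of $q$ modulo $m$), which the paper glosses over.
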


\begin{proof}

\begin{enumerate}

\item If $f(x)\in \psi(SL_{m}\left(\mathcal{R}_{q,s}\right))$ then there exists $t\in \mathbb{Z}_{>0}$  and 

\begin{eqnarray*}
f(x)&=&\psi\left(\prod_{d=1}^{t} \chi_{j_{d}k_{d}}(c_{d}(x))\right)\\
    &=&\bigcirc_{d=1}^{t}\psi\left(\chi_{j_{d}k_{d}}(c_{d}(x))\right)\\
    &=&\bigcirc_{d=1}^{t}\left(x+ \sum_{i=0}^{s-1}\sum_{r=0}^{m-1}c_{di}\left(\alpha^{q^{j_{d}}}u^{q^{k_{d}+r}}\right)x^{q^{m\cdot i +r}} \right)
\end{eqnarray*}

 where $j_{d}, k_{d}\in \{1,...,m\}$, $j_{d}\neq k_{d}$, and  $c_{d}(x)=\sum_{i=0}^{s-1}c_{di}x^{i}\in \mathcal{R}_{q,s}$ for all $d\in \{1,...,t\}$; the first equality is by Corollary \ref{csuper} (part $1$); the second  and the third ones are because $\psi$ preserves products and by Theorem \ref{elem} (part $2$), respectively. The converse is clear.\\

\item Note that, if $A\in GL_{m}(\mathcal{R}_{q,s})$, then $A=D_{1}(det(A))B$ where $B:=D_{1}(det(A)^{-1})A$ belongs to $SL_{m}(\mathcal{R}_{q,s})$. Thus, if $f(x)\in \psi(GL_{m}\left(\mathcal{R}_{q,s}\right))$, there exists $A\in GL_{m}(\mathcal{R}_{q,s})$ such that

\begin{eqnarray*}
f(x)&=&\psi\left(D_{1}(det(A)) \cdot \left[D_{1}(det(A)^{-1})A\right]\right)\\
    &=&\psi\left(D_{1}(det(A)) \right) \circ \psi\left(D_{1}(det(A)^{-1})A\right)\\
    &=& \left( x+ \sum_{i=0}^{s-1}\sum_{r=0}^{m-1}t_{i}\left(\alpha^{q}u^{q^{r}}\right)(x)^{q^{m\cdot i +r}}\right) \circ h(x)\\
    &=&h(x)+ \sum_{i=0}^{s-1}\sum_{r=0}^{m-1}t_{i}\left(\alpha^{q}u^{q^{r}}\right)(h(x))^{q^{m\cdot i +r}}
\end{eqnarray*} 
where $h(x):=\psi\left(D_{1}(det(A)^{-1})A\right)$ is a SPP and $t(x)=\sum_{i=0}^{s-1}t_{i}x^{i}:=det(A)-1$ is such that $t(x)+1\in U(\mathcal{R}_{q,s})$. Conversely, suppose there exists an SPP $h(x)$, and a polynomial $t(x)=\sum_{i=0}^{s-1}t_{i}x^{i}\in \mathcal{R}_{q,s}$ such that $t(x)+1 \in U\left(\mathcal{R}_{q,s}\right)$, and

\[f(x)=h(x)+ \sum_{i=0}^{s-1}\sum_{r=0}^{m-1}t_{i}\left(\alpha^{q}u^{q^{r}}\right)(h(x))^{q^{m\cdot i +r}} .\]
Thus, since $\psi(D_{1}(t(x)+1))$ and $h(x)$ are both PPs, then $f(x)=\psi(D_{1}(t(x)+1))\circ h(x)$ is a PP.

\end{enumerate}

\end{proof}

Note that, in spite of the depiction given in Theorem \ref{celem} being useful to construct any SPP or PP, it did not provide an explicit description of the coefficients of these. Nonetheless, the other families of PPs we introduced can be characterized by conditions on their coefficients (as seen in Theorem \ref{PP}).

\begin{theorem}\label{PP}  Let $\alpha \in \mathbb{F}_{q^{m}}$ be an $\mathbb{F}_{q}$-normal element, $B=\left\{\alpha, \alpha^{q},..., \alpha^{q^{m-1}}\right\}$ be the normal basis determined by $\alpha$, and $B'=\left\{u, u^{q},..., u^{q^{m-1}}\right \}$ the dual basis of $B$. Let  $f(x)\in R_{m}$. Then:

% Let $j,k\in\{1,...,m\}$ and $\displaystyle{c_{jk}(x)=\sum_{i=0}^{s-1}c_{jki}x^{i} \in \mathcal{R}_{q,s}}$.

\begin{enumerate}

\item $f(x)$ is a BPP iff  there exists a collection $\{c_{jj}(x)=\sum_{i=0}^{s-1}c_{jji}x^{i}\in \mathcal{R}_{q,s}: $  $ j=1,...,m \}$ such that $gcd(\prod_{j=1}^{m}c_{jj}(x), x^{s}-1)=1$ and

\[f(x)=\sum_{1\leq j\leq k\leq m}\sum_{i=0}^{s-1}\sum_{r=0}^{m-1}c_{jki}(\alpha^{q^{j}}u^{q^{k+r}})x^{q^{m\cdot i +r}}\]

 where the coefficients $c_{jki}$ are arbitrary elements in $\mathbb{F}_{q}$
if $j<k$, for all $i\in \{0,...,s-1\}$.\\

\item $f(x)$ is an SBPP iff  there exists a collection $\{c_{jj}(x)=\sum_{i=0}^{s-1}c_{jji}x^{i}\in \mathcal{R}_{q,s}:$ $  j=1,...,m \}$ such that $\prod_{j=1}^{m}c_{jj}(x)=1$  and

\[f(x)=\sum_{1\leq j\leq k\leq m}\sum_{i=0}^{s-1}\sum_{r=0}^{m-1}c_{jki}(\alpha^{q^{j}}u^{q^{k+r}})x^{q^{m\cdot i +r}}\]

 where the coefficients $c_{jki}$ are arbitrary elements in $\mathbb{F}_{q}$
if $j<k$, for all $i\in \{0,...,s-1\}$. In particular, any polynomial of the form 

\[f(x)=\sum_{j=1}^{m}\sum_{r=0}^{m-1}(\alpha^{q^{j}}u^{q^{j+r}})x^{q^{r}} + \sum_{1\leq j< k\leq m}\sum_{i=0}^{s-1}\sum_{r=0}^{m-1}c_{jki}(\alpha^{q^{j}}u^{q^{k+r}})x^{q^{m\cdot i +r}},\]

where the coefficients $c_{jki}$ are arbitrary elements in $\mathbb{F}_{q}$ for all $i\in \{0,...,s-1\}$, is an SBPP.\\

\item  $f(x)$ is a DPP iff  there exists a collection $\{c_{j}(x)=\sum_{i=0}^{s-1}c_{ji}x^{i}\in \mathcal{R}_{q,s}:$ $  j=1,...,m\}$ such that  $gcd(\prod_{j=1}^{m}c_{j}(x), x^{s}-1)=1$  and

\[f(x)=\sum_{j=1}^{m}\sum_{i=0}^{s-1}\sum_{r=0}^{m-1}c_{ji}(\alpha^{q^{j}}u^{q^{j+r}})x^{q^{m\cdot i +r}}.\]

%In particular, the polynomial  $f(x)=\sum_{j=1}^{m}\sum_{r=0}^{m-1}(\alpha^{q^{j}}u^{q^{j+r}})x^{q^{r}}=x$ by the duality of the bases B and B'.

\item $f(x)$ is an SDPP iff  there exists a collection $\{c_{j}(x)=\sum_{i=0}^{s-1}c_{ji}x^{i}\in \mathcal{R}_{q,s}:$ $  j=1,...,m\}$ such that $\prod_{j=1}^{m}c_{j}(x)=1$ and

\[f(x)=\sum_{j=1}^{m}\sum_{i=0}^{s-1}\sum_{r=0}^{m-1}c_{ji}(\alpha^{q^{j}}u^{q^{j+r}})x^{q^{m\cdot i +r}}.\]

\end{enumerate}

\end{theorem}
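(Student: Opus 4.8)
The plan is to dispatch all four parts with a single three-step recipe: first parametrize the relevant subgroup of $GL_m(\mathcal{R}_{q,s})$ in terms of the basis matrices $O_{jk}$; then translate the invertibility (or determinant) condition into the stated arithmetic condition on the diagonal entries; and finally push everything through $\psi$ using its additivity together with Theorem \ref{elem} (part 1). Since $\psi$ is a bijection, $f(x)$ lies in the $\psi$-image of a subgroup $H\le GL_m(\mathcal{R}_{q,s})$ iff $\psi^{-1}(f)\in H$, so in each case it suffices to check that the displayed coefficient form corresponds exactly to $\psi^{-1}(f)$ having the prescribed matrix shape and that the condition on the $c_{jj}(x)$ (or $c_j(x)$) is precisely the membership condition for $H$.

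For part 1, I would write an arbitrary upper triangular matrix $A\in M_m(\mathcal{R}_{q,s})$ uniquely as $A=\sum_{1\le j\le k\le m}O_{jk}\,c_{jk}(x)$, which is legitimate by the basis statement in Theorem \ref{elem} (part 1). Being upper triangular, $A$ has determinant $\prod_{j=1}^{m}c_{jj}(x)$; recalling that $g(x)$ is a unit in $\mathcal{R}_{q,s}=\mathbb{F}_q[x]/\langle x^s-1\rangle$ iff $\gcd(g(x),x^s-1)=1$, and that a product is coprime to $x^s-1$ iff every factor is, the membership $A\in B_m(\mathcal{R}_{q,s})$ becomes exactly $\gcd(\prod_{j=1}^{m}c_{jj}(x),x^s-1)=1$. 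Applying $\psi$ summand by summand and inserting the formula of Theorem \ref{elem} (part 1) for each $\psi(O_{jk}c_{jk}(x))$ then produces the displayed expression for $f(x)$; the converse direction is the same computation read backwards, reconstructing $A$ from the coefficients.

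Part 3 is the verbatim specialization in which only diagonal summands $O_{jj}c_j(x)$ occur, so that $k=j$ throughout and $u^{q^{k+r}}$ becomes $u^{q^{j+r}}$; the determinant is again $\prod_{j=1}^{m}c_j(x)$ and the same unit/gcd analysis applies. Parts 2 and 4 follow from parts 1 and 3 by replacing the unit condition on $\det(A)$ with the stronger condition $\det(A)=1$, i.e. $\prod_{j=1}^{m}c_{jj}(x)=1$ (resp. $\prod_{j=1}^{m}c_j(x)=1$), which is exactly membership in $SL_m\cap B_m$ (resp. $SL_m\cap D_m$). For the ``in particular'' clause of part 2 I would take $c_{jj}(x)=1$ for every $j$, so that $\prod_{j=1}^m c_{jj}(x)=1$ holds automatically; then $c_{jj0}=1$ and $c_{jji}=0$ for $i\ge 1$, so each diagonal summand collapses to $\sum_{r=0}^{m-1}(\alpha^{q^j}u^{q^{j+r}})x^{q^r}$, which reproduces the stated polynomial.

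The one genuinely non-cosmetic point — and hence where I expect the main care to be needed — is the chain of equivalences in the second step: that an upper triangular matrix over a commutative ring is invertible precisely when its diagonal entries are units, that a product in $\mathcal{R}_{q,s}$ is a unit iff each factor is, and that units in $\mathcal{R}_{q,s}$ are exactly the classes coprime to $x^s-1$. None of these is deep, but they must be assembled so that the single gcd condition (or the single product-equals-one condition) on the diagonal faithfully encodes membership in the Borel or diagonal subgroup. Everything else rests on the uniqueness of the $O_{jk}$-expansion from Theorem \ref{elem} and the additivity of $\psi$.
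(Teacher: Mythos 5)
Your proposal is correct and follows essentially the same route as the paper: decompose the (upper triangular or diagonal) matrix as $\sum O_{jk}c_{jk}(x)$, translate invertibility of the determinant $\prod_j c_{jj}(x)$ into the gcd condition with $x^s-1$, and apply the additivity of $\psi$ together with Theorem \ref{elem} (part 1) to each summand, with parts 2 and 4 obtained by strengthening the unit condition to $\det=1$. The only difference is that you spell out the unit/gcd equivalences that the paper states parenthetically, which is a harmless elaboration rather than a different argument.
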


\begin{proof}

\begin{enumerate}

\item Let  $f(x)\in \psi (A)$ with $A\in B_{m}(\mathcal{R}_{q,s})$. Note that $A=\sum_{1\leq j\leq k\leq m}$ $c_{jk}(x)O_{jk}$ where $c_{jk}(x)=\sum_{i=0}^{s-1}c_{jki}x^{i}\in \mathcal{R}_{q,s}$ is arbitrary if $j<k$, and $det(A)=\prod_{j=1}^{m}c_{jj}(x)\in U(\mathcal{R}_{q,s})$  (i.e., $gcd(\prod_{j=1}^{m}c_{jj}(x), x^{s}-1)=1$). Thus there exists a collection $\{c_{jj}(x)=\sum_{i=0}^{s-1}c_{jji}x^{i}\in \mathcal{R}_{q,s}: $  $ j=1,...,m \}$ such that $gcd(\prod_{j=1}^{m}c_{jj}(x), x^{s}-1)=1$ and

\begin{eqnarray*}
f(x)&=&\psi\left(\sum_{1\leq j\leq k\leq m}c_{jk}(x)O_{jk}\right)=\sum_{1\leq j\leq k\leq m}\psi\left(c_{jk}(x)O_{jk}\right)\\
    &=&\sum_{1\leq j\leq k\leq m}\sum_{i=0}^{s-1}\sum_{r=0}^{m-1}c_{jki}(\alpha^{q^{j}}u^{q^{k+r}})x^{q^{m\cdot i +r}}
\end{eqnarray*}

 where the coefficients $c_{jki}$ are arbitrary elements in $\mathbb{F}_{q}$
if $j<k$, for all $i\in \{0,...,s-1\}$; and the last equality is by Theorem \ref{elem} (part $1$).\\

\item It follows form part $1$.\\

\item Let  $f(x)\in \psi (A)$ with $A\in D_{m}(\mathcal{R}_{q,s})$. Note that $A=\sum_{j=1}^{m}c_{j}(x)O_{jj}$ where $c_{j}(x)=\sum_{i=0}^{s-1}c_{ji}x^{i}\in \mathcal{R}_{q,s}$  and $det(A)=\prod_{j=1}^{m}c_{j}(x)\in U(\mathcal{R}_{q,s})$  (i.e., $gcd(\prod_{j=1}^{m}c_{j}(x), x^{s}-1)=1$). Thus there exists a collection $\{c_{j}(x)=\sum_{i=0}^{s-1}c_{ji}x^{i}\in \mathcal{R}_{q,s}:$ $  j=1,...,m\}$ such that $gcd(\prod_{j=1}^{m}c_{j}(x), x^{s}-1)=1$ and

\begin{eqnarray*}
f(x)&=&\psi\left(\sum_{j=1}^{m}c_{j}(x)O_{jj}\right)=\sum_{j=1}^{m}\psi\left(c_{j}(x)O_{jj}\right)\\
    &=&\sum_{j=1}^{m}\sum_{i=0}^{s-1}\sum_{r=0}^{m-1}c_{ji}(\alpha^{q^{j}}u^{q^{j+r}})x^{q^{m\cdot i +r}}
\end{eqnarray*}

 where the last equality is by Theorem \ref{elem} (part $1$).\\

\item It follows form part $3$.\\

\end{enumerate}

\end{proof}

\begin{example} Let $q=3$, $m=3$, $s=6$, then $n=ms=18$ and $\mathbb{F}_{q^{n}}=\mathbb{F}_{3^{18}}$.
In this example, we present a collection of BPPs in $R_{m}=R_{3}$. Let $\displaystyle{\mathbb{F}_{q^m}=\mathbb{F}_{3^3}\cong \frac{\mathbb{F}_3 [x]}{\left\langle x^3 +2x+1\right\rangle}}$, and $\gamma\in \mathbb{F}_{3^3}$ be a primitive element. It is easy to check that $B=\left\{\alpha=\gamma^2,\alpha^{3}=\gamma^6 , \alpha^{9}=\gamma^{18} \right\}$ is a normal $\mathbb{F}_{3}$-basis of $\mathbb{F}_{3^3}$ with dual normal basis $B'=\left\{u=\gamma^{21}, u^{3}=\gamma^{11}, u^{9}=\gamma^7\right\}$. Let
\[
G=\begin{bmatrix}
x^2 +x +2 & c_{12}(x) & c_{13}(x)\\
0 & x^2 +1 & c_{23}(x)\\
0 & 0 & x^4 +1
\end{bmatrix}\in M_3\left(\mathcal{R}_{3,6} \right),
\]
then $\gcd\left(\left(x^2 +x +2\right)\left(x^2 +1\right)\left(x^4 +1\right), x^6 -1\right)=1$, i.e., $G \in B_3 \left(\mathcal{R}_{3,6}\right)$.
Then, by Theorem \ref{PP}(part $2$), 
%(in here we will not use the approach given in~\cite{gustavo}, which depends on semisimple condition on $\mathcal{R}_{q,s}$)

\begin{eqnarray*}
\psi(G)&=&\sum_{1\leq j\leq k\leq m}\sum_{i=0}^{s-1}\sum_{r=0}^{m-1}c_{jki}\left(\alpha^{q^{j}}u^{q^{k+r}}\right)x^{q^{m\cdot i +r}}\\
       &=& \sum_{1\leq j\leq k\leq 3}\sum_{i=0}^{5}\sum_{r=0}^{2}c_{jki}\left(\alpha^{3^{j}}u^{3^{k+r}}\right)x^{3^{3\cdot i +r}}\\
&=&\sum_{j=1}^{3}\sum_{i=0}^{5}\sum_{r=0}^{2}\left(\alpha^{3^{j}}u^{3^{j+r}}\right)x^{3^{m\cdot i +r}}\\ 
       &+&\sum_{1\leq j< k\leq 3}\sum_{i=0}^{5}\sum_{r=0}^{2}c_{jki}\left(\alpha^{3^{j}}u^{3^{k+r}}\right)x^{3^{m\cdot i +r}}\nonumber\\
&=&g(x)+ \sum_{1\leq j< k\leq 3}\sum_{i=0}^{5}\sum_{r=0}^{2}c_{jki}\left(\alpha^{3^{j}}u^{3^{k+r}}\right)x^{3^{m\cdot i +r}},
\end{eqnarray*}
%
%\alpha^{13}x^{4782969} +
%\alpha^{18}x^{1594323}+
%\alpha^{23}x^{531441}+
%\left(\alpha^{15}+\alpha^7 \right)x^{6561}+
%\left(\alpha^{20}+\alpha^{12}\right)x^{2187}+
%\left(\alpha^{25}+\alpha^{17} \right)x^{729}+
%\alpha^7 x^{243}+
%\alpha^{12}x^{81}+\alpha^{17}x^{27}+
%\left(\alpha^{15}+\alpha^{13}+2\alpha^7 \right)x^9 %+\left(\alpha^{20} + \alpha^{18}+2\alpha^{12} \right)x^3+
%\left(\alpha^{25}+ \alpha^{23} + 2\alpha^{17}\right)x$
% 
where $g(x)= \displaystyle{\sum_{j=1}^{3}\sum_{i=0}^{5}\sum_{r=0}^{2}\left(\alpha^{3^{j}}u^{3^{j+r}}\right)x^{3^{m\cdot i +r}}}=
\gamma^{13}x^{4782969} +
\gamma^{18}x^{1594323}+
\gamma^{23}x^{531441}+
\gamma^{22} x^{6561}+
\gamma x^{2187}+
\gamma^{6} x^{729}+
\gamma^7 x^{243}+
\gamma^{12}x^{81}+\gamma^{17}x^{27}+
\gamma^{10} x^9 +\gamma^{15} x^3+
\gamma^{20}x$, is a BPP. 
\end{example}

\section{Sizes of families of permutation polynomials in $R_{m}$}\label{S4}

In this section, the sizes of the sets of SPPs, BPPs, and DPPs are computed. Computing the size of remarkable subgroups of the general linear group $GL_{m}(R)$  of degree $m$  over a finite ring $R$ (i.e., the group of invertible $m\times m$ matrices over $R$) is a  problem of interest from a combinatorial point of view. A formula for the size of $GL_{m}(\mathcal{R}_{q,s})$ was computed in \cite[Theorem 4.3]{bcv}, while a general formula for the size of $GL_{m}(R)$, for an arbitrary finite ring $R$, was computed later in  \cite[Corollary 3.2]{ko}.\\

 In Theorem \ref{sizes}, we are going to present a formula to compute the size of the groups $SL_{m}\left(\mathcal{R}_{q,s}\right)$, $B_{m}\left(\mathcal{R}_{q,s}\right)$, and $D_{m}(\mathcal{R}_{q,s})$, i.e., the size of the sets of SPPs, BPPs, and DPPs, respectively. The size of $SL_{m}(\mathcal{R}_{q,s})$ is obtained as an application of Theorem \ref{bcv} (this could be done, alternatively, using  \cite[Corollary 3.2]{ko}). %While the sizes of  $B_{m}\left(\mathcal{R}_{q,s}\right)$, and $D_{m}(\mathcal{R}_{q,s})$  are obtained as an application of \cite[Lemma $2.1$, part $5$]{Choosuwan}.

\begin{theorem}\cite[Theorem 4.3]{bcv} \label{bcv}
Let $\displaystyle{x^{s} -1 = \prod_{j=1}^t f_{j}^{s_{j}} }$, where the $f_{j}$'s are distinct irreducible elements of $\mathbb{F}_q [x]$,  $d_{j}= deg(f_{j})$ and $\displaystyle{A_{j} =\frac{\mathbb{F}_{q} [x]}{\left\langle f_{j}^{s_{j}}\right\rangle}}$ for $j=1,2,\ldots, t.$. Then $GL_{m} \left(\mathcal{R}_{q,s}\right)$ is isomorphic to the direct product (of groups) $GL_{m}(A_{1})\times \cdots \times GL_{m}(A_{t}) $. Moreover
\begin{equation*}
    \left|GL_m \left(\mathcal{R}_{q,s}\right)\right| = q^{m^{2}s} \prod_{j=1}^t \prod_{i=1}^m \left(1 - q^{-i\cdot d_j}\right).
\end{equation*}
\end{theorem}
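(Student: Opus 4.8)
The plan is to combine the Chinese Remainder Theorem with a standard counting argument for $GL_m$ over finite local rings. First I would observe that, since the $f_j$ are distinct irreducibles, the ideals $\langle f_j^{s_j}\rangle$ are pairwise comaximal, so the Chinese Remainder Theorem yields a ring isomorphism $\mathcal{R}_{q,s}\cong \prod_{j=1}^t A_j$. Because the formation of $m\times m$ matrices commutes with finite direct products of rings, and because the unit group of a product is the product of the unit groups, this lifts to $M_m(\mathcal{R}_{q,s})\cong \prod_{j=1}^t M_m(A_j)$ and hence to the group isomorphism $GL_m(\mathcal{R}_{q,s})=U(M_m(\mathcal{R}_{q,s}))\cong \prod_{j=1}^t U(M_m(A_j))=\prod_{j=1}^t GL_m(A_j)$, which is the first assertion. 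The cardinality then factors as $|GL_m(\mathcal{R}_{q,s})|=\prod_{j=1}^t |GL_m(A_j)|$, reducing the count to the local factors.

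Next I would analyze each $A_j=\mathbb{F}_q[x]/\langle f_j^{s_j}\rangle$. Since $f_j$ is irreducible, $\langle f_j\rangle$ is the unique maximal ideal containing $\langle f_j^{s_j}\rangle$, so $A_j$ is a finite local ring with maximal ideal $\mathfrak{m}_j=\langle f_j\rangle/\langle f_j^{s_j}\rangle$ and residue field $A_j/\mathfrak{m}_j\cong \mathbb{F}_q[x]/\langle f_j\rangle\cong \mathbb{F}_{q^{d_j}}$. Counting degrees gives $|A_j|=q^{s_j d_j}$ and therefore $|\mathfrak{m}_j|=q^{(s_j-1)d_j}$.

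The crux of the argument is a counting lemma for a finite local ring $R$ with maximal ideal $\mathfrak{m}$ and residue field $k$: I claim $|GL_m(R)|=|GL_m(k)|\cdot|\mathfrak{m}|^{m^2}$. The key point, and the step I expect to require the most care, is that a matrix $A\in M_m(R)$ is invertible if and only if its reduction $\bar A\in M_m(k)$ is invertible; this holds because in a local ring an element is a unit precisely when it lies outside $\mathfrak{m}$, so $A$ is invertible iff $\det(A)$ is a unit iff $\det(\bar A)=\overline{\det(A)}\neq 0$. Granting this, the entrywise reduction $M_m(R)\to M_m(k)$ is a surjection whose fibers all have size $|\mathfrak{m}|^{m^2}$, and $GL_m(R)$ is exactly the preimage of $GL_m(k)$, which yields the stated count. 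Applying this with $k=\mathbb{F}_{q^{d_j}}$ and the classical formula $|GL_m(\mathbb{F}_Q)|=Q^{m^2}\prod_{i=1}^m(1-Q^{-i})$ for $Q=q^{d_j}$ gives
\[
|GL_m(A_j)|=q^{m^2 d_j}\prod_{i=1}^m\bigl(1-q^{-i d_j}\bigr)\cdot q^{(s_j-1)d_j m^2}=q^{m^2 s_j d_j}\prod_{i=1}^m\bigl(1-q^{-i d_j}\bigr).
\]

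Finally I would multiply over $j$ and simplify the exponent. Taking the product of the displayed expression over $j=1,\ldots,t$ and using the degree identity $\sum_{j=1}^t s_j d_j=\deg(x^s-1)=s$ collapses the prefactor to $q^{m^2 s}$, giving $|GL_m(\mathcal{R}_{q,s})|=q^{m^2 s}\prod_{j=1}^t\prod_{i=1}^m(1-q^{-i d_j})$, as claimed. No step beyond the local invertibility criterion presents a genuine obstacle; the remaining work is bookkeeping of cardinalities and the degree count.
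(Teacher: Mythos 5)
Your argument is correct and complete: the CRT decomposition into the local factors $A_j$, the reduction-mod-$\mathfrak{m}$ counting lemma for $GL_m$ over a finite local ring, the classical formula over the residue field $\mathbb{F}_{q^{d_j}}$, and the degree identity $\sum_j s_j d_j = s$ together give exactly the stated formula, and the one delicate step (invertibility of $A$ over $A_j$ being equivalent to invertibility of $\bar A$ over the residue field, via the determinant and the unit criterion in a local ring) is justified properly. Note that the paper itself offers no proof of this statement --- it is quoted verbatim as Theorem 4.3 of Brawley--Carlitz--Vaughan --- so there is no internal argument to compare against; your proof is the standard one and is consistent with the techniques the paper does use nearby (e.g.\ the CRT-plus-chain-ring count of $|U(\mathcal{R}_{q,s})|$ in its Theorem on sizes).
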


If $s=1$ in Theorem \ref{bcv}, one get the well-known formula 
\begin{eqnarray*}
|GL_{n}(\mathbb{F}_{q})|&=& q^{{n}^2}  \prod_{i=1}^n \left(1 - q^{-i}\right)=  \prod_{i=1}^n q^{n}\left(1 - q^{-i}\right)\\
                        &=& (q^{n}-q^{n-1})\cdot (q^{n}-q^{n-2})\cdots (q^{n}-1).
\end{eqnarray*}

\begin{theorem}\label{sizes}
 Let $x^{s}-1=\prod_{j=1}^{t}f_{j}^{s_{j}}$ be the decomposition of $x^{s}-1$ (over $\mathbb{F}_{q}[x]$) into powers of irreducible distinct factors, and $d_{j}=deg(f_{j})$ for $j=1,...,t$. Then the following holds:

 \begin{enumerate}
     \item $\displaystyle{\left| U\left(\mathcal{R}_{q,s}\right)\right| = \prod_{j=1} ^t \left(q^{d_j}-1 \right) q^{d_j\left(s_j - 1\right)}}$
     
     \item $\displaystyle{\left|SL_m\left(\mathcal{R}_{q,s}\right) \right|=\frac{q^{m^2s} \prod_{j=1} ^t \prod_{i=1} ^m \left(1-q^{-i\cdot d_j}\right)}{\prod_{j=1}^t \left(q^{d_j} -1 \right) q^{d_j\left(s_j -1\right)}}}$
     \item $\displaystyle{\left|B_{m} \left(\mathcal{R}_{q,s}\right)\right|=q^{s{\left(\frac{m(m-1)}{2}\right)}}\left(\prod_{j=1} ^t \left(q^{d_j}-1\right)q^{d_j\left(s_j -1\right)}\right)}^{m}$
     
     \item $\displaystyle{ \left|D_{m} \left(\mathcal{R}_{q,s}\right)\right|=\left(\prod_{j=1} ^t \left(q^{d_j}-1\right)q^{d_j\left(s_j -1\right)}\right)}^{m}$
     
 \end{enumerate}

\end{theorem}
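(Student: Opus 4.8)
The plan is to reduce all four counts to the structure of $\mathcal{R}_{q,s}$ as a product of finite local rings, so the only substantive computation is the size of the unit group in Part 1; Parts 2--4 are then bookkeeping built on top of it together with Theorem~\ref{bcv}. By the Chinese Remainder Theorem applied to the factorization $x^{s}-1=\prod_{j=1}^{t}f_{j}^{s_{j}}$ into pairwise coprime prime powers, there is a ring isomorphism $\mathcal{R}_{q,s}\cong\prod_{j=1}^{t}A_{j}$, where each $A_{j}=\mathbb{F}_{q}[x]/\langle f_{j}^{s_{j}}\rangle$ is local with maximal ideal generated by the image of $f_{j}$ and residue field $\mathbb{F}_{q^{d_{j}}}$. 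For Part~1 I would first note $U(\mathcal{R}_{q,s})\cong\prod_{j}U(A_{j})$. In each $A_{j}$ the non-units are exactly the elements of the maximal ideal $\mathfrak{m}_{j}$; since $|A_{j}|=q^{d_{j}s_{j}}$ and $A_{j}/\mathfrak{m}_{j}\cong\mathbb{F}_{q^{d_{j}}}$ has index $q^{d_{j}}$, we get $|\mathfrak{m}_{j}|=q^{d_{j}(s_{j}-1)}$ and hence $|U(A_{j})|=q^{d_{j}s_{j}}-q^{d_{j}(s_{j}-1)}=(q^{d_{j}}-1)q^{d_{j}(s_{j}-1)}$. Multiplying over $j$ gives the claimed formula.

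Parts~2 and~4 follow immediately. For Part~2, the determinant map $\det\colon GL_{m}(\mathcal{R}_{q,s})\to U(\mathcal{R}_{q,s})$ is a group homomorphism with kernel $SL_{m}(\mathcal{R}_{q,s})$, and it is surjective because the row-multiplication matrices of Section~\ref{S2} satisfy $\det(D_{1}(u))=u$ for every unit $u$. Thus $|SL_{m}(\mathcal{R}_{q,s})|=|GL_{m}(\mathcal{R}_{q,s})|/|U(\mathcal{R}_{q,s})|$, and substituting Theorem~\ref{bcv} in the numerator and Part~1 in the denominator yields the stated quotient. For Part~4, a diagonal matrix is invertible precisely when each of its $m$ diagonal entries is a unit, so $|D_{m}(\mathcal{R}_{q,s})|=|U(\mathcal{R}_{q,s})|^{m}$, which is Part~1 raised to the $m$-th power.

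Part~3 is the Borel case. An upper triangular matrix $A$ has $\det(A)=\prod_{i}a_{ii}$, and $A$ is invertible iff $\det(A)\in U(\mathcal{R}_{q,s})$. Over any commutative ring a product is a unit iff each factor is a unit (if $\prod_{i}a_{ii}=u$ then $a_{i i}\bigl(u^{-1}\prod_{k\neq i}a_{kk}\bigr)=1$, and conversely), so invertibility of $A$ is equivalent to all diagonal entries lying in $U(\mathcal{R}_{q,s})$. The count then factorizes: the $m$ diagonal entries range independently over $U(\mathcal{R}_{q,s})$, while the $\binom{m}{2}=m(m-1)/2$ strictly upper entries range freely over $\mathcal{R}_{q,s}$, which has $|\mathcal{R}_{q,s}|=q^{s}$ elements since $\dim_{\mathbb{F}_{q}}\mathcal{R}_{q,s}=\deg(x^{s}-1)=s$. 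Hence $|B_{m}(\mathcal{R}_{q,s})|=q^{s\,m(m-1)/2}\,|U(\mathcal{R}_{q,s})|^{m}$, and inserting Part~1 gives the formula.

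The main obstacle is really confined to Part~1: everything else is multiplicative bookkeeping once the unit count is known and Theorem~\ref{bcv} supplies $|GL_{m}(\mathcal{R}_{q,s})|$. The crux is therefore to identify $\mathcal{R}_{q,s}$ explicitly with the product $\prod_{j}A_{j}$ of local rings and to pin down the size of each maximal ideal; the triangular and diagonal invertibility criteria and the surjectivity of $\det$ are standard facts over commutative rings and require no special argument beyond noting they apply here.
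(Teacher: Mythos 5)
Your proposal is correct and follows essentially the same route as the paper: CRT decomposition of $\mathcal{R}_{q,s}$ into the local rings $A_{j}$ for the unit count, the determinant epimorphism plus the First Isomorphism Theorem for $SL_{m}$, and independent entry-counting for the Borel and diagonal subgroups. The only difference is cosmetic: where the paper cites a lemma on finite chain rings for $|U(A_{j})|$, you derive it directly from the fact that the non-units of a local ring are exactly the maximal ideal, which makes the argument self-contained.
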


\begin{proof}

\begin{enumerate}
   \item Note that
    
\begin{equation*}
\left| U\left(\mathcal{R}_{q,s}\right)\right|=\left| U\left(\frac{\mathbb{F}_q [x]}{\left\langle x^s -1 \right\rangle} \right) \right|=\left|U\left(\prod_{j=1} ^t \frac{\mathbb{F}_q [x]}{\left\langle f_j ^{s_j} \right\rangle}\right)\right|=\left|\prod_{j=1} ^t  U\left(\frac{\mathbb{F}_q [x]}{\left\langle f_j ^{s_j} \right\rangle}\right)\right|,
\end{equation*}

where the second equality is by the Chinese Remainder Theorem. Let $\displaystyle{A_{j}:= \frac{\mathbb{F}_q [x]}{\left\langle f_{j} ^{s_{j}} \right\rangle}}$, then $A_{j}$ is a finite chain ring with maximal ideal $M_{j}:=A_{j}\overline{f_{j}}$ for $j=1,...,t$. Thus $\displaystyle{ M_{j}=\frac{\mathbb{F}_{q} [x]  f_{j}}{ \left\langle f_{j}^{s_{j}} \right\rangle }}$ so that
\[
  \left|\frac{A_{j}}{M_{j}}\right| =\left|\frac{\frac{\mathbb{F}_q [x]}{\left\langle f_j ^{s_{j}}\right\rangle}}{\frac{\mathbb{F}_q [x] f_{j} }{\left\langle f_{j} ^{s_{j}}\right\rangle}}\right| 
  %= \frac{\left|\frac{\mathbb{F}_q [x]}{\left\langle f_{j} ^{s_{j}}\right\rangle}\right|}{\left|\frac{\mathbb{F}_{q} [x] f_{j} }{\left\langle f_{j} ^{s_{j}}\right\rangle}\right|}
  =\left|\frac{\mathbb{F}_{q} [x]}{\mathbb{F}_{q} [x] f_{j}}\right|=q^{d_{j}},
\]
where the second equality is by the Third Isomorphism Theorem. Thus, by \cite[Lemma $2.1$, part $5$]{choosuwan}, $|U\left(A_{j}\right)|=\left(q^{d_{j}}-1 \right) q^{d_{j}\left(s_{j} - 1\right)}$ for $j=1,...,t$. Hence  $\displaystyle{\left| U\left(\mathcal{R}_{q,s}\right)\right| = \prod_{j=1} ^t \left(q^{d_j}-1 \right) q^{d_j\left(s_j - 1\right)}}$.\\

    \item  Since
$$
\begin{array}{cccc}
   det: & GL_m \left( \mathcal{R}_{q,s}\right) & \rightarrow & U\left(\mathcal{R}_{q,s}\right) \\
     &A&\mapsto&det(A) 
\end{array}
$$ 
is a group epimorphism and $Ker(det)=SL_m \left(\mathcal{R}_{q,s}\right)$, then $\left| U\left(\mathcal{R}_{q,s}\right)\right| = $ $\dfrac{\left|GL_m \left(\mathcal{R}_{q,s}\right)\right|}{\left|SL_m \left(\mathcal{R}_{q,s}\right)\right|}$ (by the First Isomorphism Theorem). Thus, by Theorem \ref{bcv} and part $1$,
\[
    \left|SL_{m} \left(\mathcal{R}_{q,s}\right)\right| = \frac{\left|GL_{m} \left(\mathcal{R}_{q,s}\right)\right|}{\left| U\left(\mathcal{R}_{q,s}\right)\right|} = \frac{q^{m^{2}s} \prod_{j=1}^t \prod_{i=1}^m \left(1-q^{-i\cdot d_j}\right)}{\prod_{j=1}^t \left(q^{d_{j}} -1 \right) q^{d_{j}\left(s_{j} -1\right)}}.
\]

  \item Since the determinant of any matrix in $B_{m}(\mathcal{R}_{q,s})$ is a unity obtained as the product of the elements in the main diagonal, all these elements must be unities. Hence, there are $\left| U\left(\mathcal{R}_{q,s}\right)\right|$ possibilities for each one of the $m$ entries in the main diagonal of any matrix in $B_{m}(\mathcal{R}_{q,s})$.  On the other hand, there are $|\mathcal{R}_{q,s}|=q^{s}$ possibilities for each one of the $\displaystyle{\frac{m(m-1)}{2}}$ entries over the main diagonal of any matrix in $B_{m}(\mathcal{R}_{q,s})$. Hence 

\begin{eqnarray*}
|B_{m}(\mathcal{R}_{q,s})|&=&\left(q^{s}\right)^{\left(\frac{m (m-1)}{2}\right)}\left| U\left(\mathcal{R}_{q,s}\right)\right|^{m}\\ 
                          &=&  q^{s\left(\frac{m(m-1)}{2}\right)}\left(\prod_{j=1}^t \left(q^{d_j} -1 \right) q^{d_j\left(s_j -1\right)}\right)^{m}
\end{eqnarray*}

where the last equality is by part $1$.\\

\item Since the determinant of any matrix in $D_{m}(\mathcal{R}_{q,s})$ is a unity obtained as the product of the elements in the main diagonal, all these elements must be in $U\left(\mathcal{R}_{q,s}\right)$.  Hence $ \left|D_{m}(\mathcal{R}_{q,s})\right|= \left|U\left(\mathcal{R}_{q,s}\right)\right|^{m}$.

\end{enumerate}
\end{proof}

%It is easy to check that if $s=1$ in Theorem \ref{sizes}, one get the well-known formulas $|SL_{n}(\mathbb{F}_{q})|=q^{\frac{n(n-1)}{2}}(q^{n}-1)\cdots(q^{2}-1)$ and $|BL_{n}(\mathbb{F}_{q})|=q^{\frac{n(n-1)}{2}}(q-1)^{n}$.

\section{Linear permutations polynomials over $\mathbb{F}_{q^{2p}}$ with prescribed coefficients in $\mathbb{F}_{q^{2}}$}\label{S5}

In this section, some linear PPs of $\mathbb{F}_{q^{2p}}$ (where $p$ is a prime) whose coefficients are prescribed and belong to the subfield $\mathbb{F}_{q^{2}}$ are studied. In this case,   $s=p$ and $m=2$ so that the algebra $R_{2}$ of linear polynomials of $\mathbb{F}_{q^{2p}}$ with coefficient in $\mathbb{F}_{q^{2}}$ is isomorphic to $M_{2}(\mathcal{R}_{q,p})$ (by Theorem \ref{thbcv}).\\

\begin{lemma}\label{corgusta}
Let $\mathbb{F}_q$ be a finite field and $p$ an odd prime  such that $q$ is a primitive element in $\mathbb{F}_{p}$. Given a linear permutation polynomial $\displaystyle{f(x)=\sum_{j=0}^{p -1} f_j x^{q^j}}$ over $\mathbb{F}_{q}$, if
the following conditions hold true:
\begin{enumerate}
\item $\displaystyle{\sum_{j=0}^{p -1}f_j\neq 0}$
\item  $\displaystyle{-\dfrac{1}{p}\sum_{j=1}^{p -1} f_{j}+\left(1-\dfrac{1}{p}\right)f_0\neq 0,}$
\end{enumerate}
over $\mathbb{F}_q$, then $f(x)$ is a linear permutation over $\mathbb{F}_{q^{2p}}$.
\end{lemma}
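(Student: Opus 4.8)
The plan is to view $f$ as an element of $R_2$ (its coefficients $f_j$ lie in $\mathbb{F}_q\subseteq\mathbb{F}_{q^2}$) and to test the permutation property through the isomorphism $\psi\colon M_2(\mathcal{R}_{q,p})\to R_2$ of (\ref{mainiso}). Writing $A=\psi^{-1}(f)\in M_2(\mathcal{R}_{q,p})$, the map $f$ permutes $\mathbb{F}_{q^{2p}}$ if and only if $A\in GL_2(\mathcal{R}_{q,p})$, that is, if and only if $\det(A)\in U(\mathcal{R}_{q,p})$, equivalently $\gcd(\det(A),x^{p}-1)=1$. First I would determine $A$ explicitly: because every $f_j$ lies in the base field $\mathbb{F}_q$, the images computed in Theorem \ref{elem} force $A$ into a rigid symmetric, companion-type shape whose entries are the even- and odd-indexed parts $A(x)=\sum_k f_{2k}x^{k}$ and $C(x)=\sum_k f_{2k+1}x^{k}$ of $f$. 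From this one reads off $\det(A)$ as an explicit element $\delta(x)\in\mathcal{R}_{q,p}$ depending only on the $f_j$.

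The second step exploits the hypothesis that $q$ is a primitive root modulo $p$: then $q$ has order $p-1$ in $\mathbb{F}_p^{\times}$, so the cyclotomic polynomial $\Phi_p(x)=1+x+\cdots+x^{p-1}$ is irreducible over $\mathbb{F}_q$, and $x^{p}-1=(x-1)\Phi_p(x)$ is the factorization into distinct irreducibles (note $p\neq\operatorname{char}\mathbb{F}_q$, so $x^p-1$ is separable). By Theorem \ref{bcv} this yields $GL_2(\mathcal{R}_{q,p})\cong GL_2(\mathbb{F}_q)\times GL_2(\mathbb{F}_{q^{p-1}})$, the two factors being the reductions of entries at $x=1$ and at a fixed root $\zeta$ of $\Phi_p$ (a primitive $p$-th root of unity). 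Hence $A\in GL_2(\mathcal{R}_{q,p})$ if and only if $\delta(1)\neq 0$ in $\mathbb{F}_q$ and $\delta(\zeta)\neq 0$ in $\mathbb{F}_{q^{p-1}}$; this is exactly the point at which only two scalar conditions survive, matching (1) and (2).

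The final step is to identify these two non-vanishings with the stated conditions. Evaluating $\delta$ at $x=1$ collapses $A$ to the matrix of $f$ on its fixed subfield $\mathbb{F}_{q^2}$ (where the permutation hypothesis over $\mathbb{F}_q$ controls the relevant component), and a direct computation exhibits $\sum_{j=0}^{p-1}f_j$ as a factor of $\delta(1)$, whose non-vanishing is condition (1). For the $\zeta$-component I would use Fourier inversion over the $p$-th roots of unity, $f_0=\tfrac1p\sum_{k=0}^{p-1}l(\zeta^{k})$ with $l(x)=\sum_j f_j x^{j}$, together with the fact that $\{\zeta^{k}:1\le k\le p-1\}$ is exactly the Galois orbit of $\zeta$ (since $\Phi_p$ is irreducible); this rewrites the left-hand side of (2) as $\tfrac1p\sum_{k=1}^{p-1}l(\zeta^{k})=\tfrac1p\operatorname{Tr}_{\mathbb{F}_{q^{p-1}}/\mathbb{F}_q}\!\big(l(\zeta)\big)$. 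The hard part will be precisely this last reduction: one must compute $\delta(x)$ exactly and show that its image in $\mathbb{F}_{q^{p-1}}$ is nonzero under the single linear condition (2), which a priori controls only a trace of $l(\zeta)$ rather than $\delta(\zeta)$ itself. The primitive-root hypothesis is essential throughout, since it makes $\Phi_p$ irreducible (so the second factor is the single field $\mathbb{F}_{q^{p-1}}$, governed by one trace-type obstruction) and guarantees $p\neq\operatorname{char}\mathbb{F}_q$, so that the scalar $\tfrac1p$ appearing in (2) is well defined.
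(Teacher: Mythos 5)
Your overall reduction (pass to $\psi^{-1}(f)\in M_2(\mathcal{R}_{q,p})$, test invertibility via the determinant, split things into irreducible factors using the primitive-root hypothesis) is a sensible plan, and for what it is worth the paper offers no argument to compare against: its entire proof is the citation ``take $p=2$ and $m=1$ in \cite[Corollary 4.2]{gustavo}''. But the step you yourself flag as ``the hard part'' is not merely hard --- it cannot be completed, because conditions (1) and (2) do not imply the conclusion as stated. A $q$-linearized polynomial $f=\sum_{j=0}^{p-1}f_jx^{q^j}$ with $f_j\in\mathbb{F}_q$ permutes $\mathbb{F}_{q^{2p}}$ iff $\gcd\bigl(l(x),x^{2p}-1\bigr)=1$, where $l(x)=\sum_jf_jx^j$, and in odd characteristic $x^{2p}-1=(x-1)(x+1)\Phi_p(x)\Phi_{2p}(x)$. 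In your matrix picture this shows up as each of the two quantities you must control being a \emph{product}: $\delta(1)=l(1)\,l(-1)$ and $\delta(\zeta)=l(\eta)\,l(-\eta)$ with $\eta^2=\zeta$. Conditions (1) and (2) control exactly one factor at each point: (1) says $(x-1)\nmid l$, and (2) says $\Phi_p\nmid l$ (since $\deg l\le p-1$ forces $l=c\Phi_p$, and then $f_0-\tfrac1p\sum_jf_j=c-c=0$). Nothing rules out $(x+1)\mid l$ or $\Phi_{2p}\mid l$. Your sentence ``a direct computation exhibits $\sum_jf_j$ as a factor of $\delta(1)$, whose non-vanishing is condition (1)'' already contains the slip: non-vanishing of one factor of a product says nothing about the product.

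Concretely, take $q=5$, $p=3$ (so $5\equiv 2$ is a primitive root mod $3$) and $f(x)=x+4x^{5}+x^{25}$, i.e.\ $l(x)=1-x+x^{2}=\Phi_{6}(x)$. Then $\sum_jf_j=1\neq0$ and $-\tfrac13(f_1+f_2)+\bigl(1-\tfrac13\bigr)f_0=\tfrac23=4\neq0$ in $\mathbb{F}_5$, and $f$ even permutes $\mathbb{F}_{5}$ and $\mathbb{F}_{5^{3}}$; yet $\Phi_6\mid x^{6}-1$, so $f$ has a kernel of size $25$ on $\mathbb{F}_{5^{6}}$ and is not a permutation of it. What conditions (1) and (2) genuinely prove is $\gcd(l(x),x^{p}-1)=1$ --- equivalently, that $f$ permutes $\mathbb{F}_{q^{p}}$, or that the $q^{2}$-linearized polynomial $\sum_jf_jx^{q^{2j}}$ permutes $\mathbb{F}_{q^{2p}}$. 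That weaker statement is also all that is used downstream: in Theorem~\ref{coefpol} the lemma is applied to the coefficients of $\det(G)\in\mathcal{R}_{q,p}$, where only coprimality with $x^{p}-1$ is needed. So the productive move is not to push your reduction through for $x^{2p}-1$, but to restate the lemma for $x^{p}-1=(x-1)\Phi_p(x)$, where the two-factor argument above closes it in a few lines without any matrix machinery.
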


\begin{proof}
 It follows from taking $p=2$ and $m=1$ in \cite[Corollary 4.2]{gustavo}.   
\end{proof}

\begin{theorem}\label{coefpol}
 Let  $\alpha \in \mathbb{F}_{q^{2}}$ be an $\mathbb{F}_{q}$-normal element, $B=\left\{\alpha, \alpha^{q}\right\}$ be the normal basis determined by $\alpha$, and $B'=\left\{u, u^{q}\right \}$ the dual basis of $B$. Then the following holds:
 \begin{enumerate}
     \item  If $g(x)\in R_{2}$ then there exist coefficients $f_{11i}, f_{12i}, f_{21i}, f_{22i}\in \mathbb{F}_{q}$ for $i=1,...,p-1$, such that $\displaystyle{g(x)=\sum_{i=0} ^{p -1} g_i \left(x^{q^{2i}}\right)}$ where 
     \[g_i (x)=\left[ \left(f_{12i} \alpha^{q} + f_{22i} \alpha \right)u + \left(f_{11i} \alpha^{q} + f_{21i} \alpha \right)u^q  \right]x + \left[\left(f_{11i} \alpha^{q} + f_{21i} \alpha \right)u + \left(f_{12i} \alpha^{q} + f_{22i} \alpha\right)u^{q}  \right]x^q ,\]   
  for all $i=1,...,s-1$.
  \item Let $g(x)\in R_{2}$ be represented as in part $1$. Then $g(x)$ is a PP iff 
  \[gcd\left(\sum_{i=0} ^{p -1}\left(\sum_{i\equiv r+s \mod p } f_{11r}f_{22s} - f_{12r} f_{21s}  \right)x^i, x^{p}-1 \right)=1.\]
  In particular, if $p$ is an odd prime such that $q$ is a primitive element in $\mathbb{F}_{p}$ and
\[\sum_{i=0} ^{p -1}\left(\sum_{i\equiv r+s \mod p } f_{11r}f_{22s} - f_{12r} f_{21s}  \right)\not\in \left\{0,\, p\sum_{{\substack{0\equiv r+s\mod p \\ 0\leq r \leq  s\leq p-1}}}\left(f_{11r}f_{22s} - f_{12r} f_{21s}\right)\right\},\]
then $g(x)$ is a PP.
 \end{enumerate}
 \end{theorem}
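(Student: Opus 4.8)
The plan is to transfer the whole statement to the matrix side through the isomorphism $\psi$ of \eqref{mainiso} and then read off invertibility from a determinant in $\mathcal{R}_{q,p}$. For part~1 I would begin from the surjectivity of $\psi$: every $g(x)\in R_{2}$ is $\psi(G)$ for a unique $G=\sum_{i=0}^{p-1}G_{i}x^{i}\in M_{2}(\mathcal{R}_{q,p})$ with $G_{i}\in M_{2}(\mathbb{F}_{q})$, and writing $G_{i}=\sum_{j,k=1}^{2}f_{jki}O_{jk}$ defines the scalars $f_{jki}\in\mathbb{F}_{q}$ whose existence is asserted. Then, by $\mathbb{F}_{q}$-linearity, $g(x)=\sum_{i,j,k}f_{jki}\,\psi(O_{jk}x^{i})$, and I would substitute the value $\psi(O_{jk}x^{i})=(\alpha^{q^{j}}u^{q^{k}})x^{q^{2i}}+(\alpha^{q^{j}}u^{q^{k+1}})x^{q^{2i+1}}$ given by Theorem~\ref{elem}(1) with $m=2$. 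Collecting the coefficients of $x^{q^{2i}}$ and $x^{q^{2i+1}}$ produces $g_{i}(x)$; a short regrouping of the four products $\alpha^{q^{j}}u^{q^{k}}$ (using $\alpha^{q^{2}}=\alpha$ and $u^{q^{2}}=u$, together with the cyclic index convention of Theorem~\ref{elem} that makes $j=1,2$ correspond to $\alpha^{q},\alpha$) yields exactly the stated expression for $g_{i}$.

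For the equivalence in part~2, I would use that $g=\psi(G)$ is a PP iff $G\in GL_{2}(\mathcal{R}_{q,p})$, i.e. iff $\det(G)\in U(\mathcal{R}_{q,p})$, which for $\mathcal{R}_{q,p}=\mathbb{F}_{q}[x]/\langle x^{p}-1\rangle$ is equivalent to $\gcd(\det(G),x^{p}-1)=1$. It remains to compute $\det(G)$. Since the $(j,k)$ entry of $G$ is $\sum_{r}f_{jkr}x^{r}$, expanding the $2\times2$ determinant gives $(\sum_{r}f_{11r}x^{r})(\sum_{s}f_{22s}x^{s})-(\sum_{r}f_{12r}x^{r})(\sum_{s}f_{21s}x^{s})$; reducing modulo $x^{p}-1$ (so exponents are read mod $p$) makes the coefficient of $x^{i}$ equal to $\sum_{r+s\equiv i}(f_{11r}f_{22s}-f_{12r}f_{21s})$, which is precisely the polynomial inside the gcd. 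This proves the iff.

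For the sufficient condition I would put $D_{i}:=\sum_{r+s\equiv i}(f_{11r}f_{22s}-f_{12r}f_{21s})$, so $\det(G)=\sum_{i}D_{i}x^{i}$, and attach to it the $q$-polynomial $L(x)=\sum_{i=0}^{p-1}D_{i}x^{q^{i}}$ over $\mathbb{F}_{q}$. The bridge is Theorem~\ref{thbcv} in the case $m=1$, $s=p$ (so $R_{1}\cong\mathcal{R}_{q,p}$): $\det(G)$ is a unit of $\mathcal{R}_{q,p}$ iff $L$ permutes $\mathbb{F}_{q^{p}}$. Moreover $\mathbb{F}_{q^{p}}$ is invariant under $L$ (all $D_{i}\in\mathbb{F}_{q}$), so if $L$ permutes $\mathbb{F}_{q^{2p}}$ it also permutes $\mathbb{F}_{q^{p}}$. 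Hence it suffices to verify the two hypotheses of Lemma~\ref{corgusta} for $L$ (with $f_{j}=D_{j}$): hypothesis~(1) is $\sum_{i}D_{i}\neq0$, while hypothesis~(2), after the simplification $-\tfrac1p\sum_{j=1}^{p-1}D_{j}+(1-\tfrac1p)D_{0}=D_{0}-\tfrac1p\sum_{i}D_{i}$, becomes $\sum_{i}D_{i}\neq p\,D_{0}$. Rewriting $\sum_{i}D_{i}$ and $D_{0}$ in terms of the $f_{jki}$ gives the excluded set in the statement and completes the proof.

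The step I expect to be most delicate is the last one: making the second excluded value match exactly. I must identify the constant coefficient $D_{0}=\sum_{r+s\equiv 0\bmod p}(f_{11r}f_{22s}-f_{12r}f_{21s})$ precisely, deciding whether the summation runs over all ordered pairs $(r,s)$ or only over $0\le r\le s$, and then confirm that the only non-unit $\det(G)$ with $\det(G)(1)\neq0$ is a scalar multiple of $\Phi_{p}(x)=1+x+\cdots+x^{p-1}$ (using that $q$ primitive mod $p$ forces $\Phi_{p}$ irreducible over $\mathbb{F}_{q}$), for which indeed all $D_{i}$ coincide and $\sum_{i}D_{i}=p\,D_{0}$. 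This irreducibility-plus-index-bookkeeping is what pins down the set $\{0,\,p\,D_{0}\}$; the remaining computations are routine applications of Theorem~\ref{elem}, the determinant criterion, and Lemma~\ref{corgusta}.
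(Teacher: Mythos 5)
Your proposal is correct and follows essentially the same route as the paper: part 1 via surjectivity of $\psi$ and Theorem~\ref{elem}(1) with $m=2$, the equivalence in part 2 via $\det(G)\in U(\mathcal{R}_{q,p})$ and the explicit $2\times 2$ determinant, and the sufficient condition via Lemma~\ref{corgusta} applied to the coefficient vector of $\det(G)$. The two points you flag as delicate are genuine: the paper leaves implicit the bridge you supply from ``the $q$-polynomial $L$ permutes $\mathbb{F}_{q^{2p}}$'' back to ``$\det(G)$ is a unit of $\mathcal{R}_{q,p}$'' (via invariance of the subfield $\mathbb{F}_{q^{p}}$ and the $m=1$ isomorphism), and the restriction $0\le r\le s$ in the excluded value $p\,D_{0}$ is indeed an inconsistency in the statement --- the paper's own displayed computation uses the unrestricted sum $\sum_{0\equiv r+s \bmod p}$ in its first two lines --- so your reading of $D_{0}$ as the full ordered sum is the correct one.
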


\begin{proof}
\begin{enumerate}
    \item Let $g(x)\in R_{2}$, then there exists a matrix $G\in M_{2}(\mathcal{R}_{q,p})$ such that $\psi(G)=g(x)$ (by Theorem \ref{thbcv}). Since any $G\in M_{2}(\mathcal{R}_{q,p})$, it can be written as elements in $\mathcal{O}=\{O_{jk}c(x): c(x)\in \mathcal{R}_{q,p}  \wedge j,k\in \{1,2\} \}$, then $G=\sum_{1\leq j,k\leq 2}\left(\sum_{i=0}^{p-1}f_{jki}x^{i}\right)O_{jk}$ where $\sum_{i=0}^{p-1}f_{jki}x^{i}\in \mathcal{R}_{q,p}$ for $i=1,...,p-1$. Thus, using Theorem \ref{elem} (part $1$) one gets the desired representation for $g(x)$.
    \item $g(x)$ is a PP iff $G:=\varphi(g(x))$ is an invertible matrix; or equivalently, $det(G)$ is a unity in $\mathcal{R}_{q,p}$; which means, $gcd(det(G), x^{p}-1)=1$. The rest follows from the fact that \[
 det(G)=\begin{vmatrix}
\sum_{i=0}^{p-1}f_{11i}x^{i} & \sum_{i=0}^{p-1}f_{12i}x^{i} \\
\sum_{i=0}^{p-1}f_{21i}x^{i} & \sum_{i=0}^{p-1}f_{22i}x^{i} 
\end{vmatrix} =\sum_{i=0}^{p -1}\left(\sum_{i\equiv r+s \mod p } f_{11r}f_{22s} - f_{12r} f_{21s}  \right)x^i.\] 
In particular, if $p$ be an odd prime such that $q$ is a primitive element in $\mathbb{F}_{p}$ and
\[\sum_{i=0} ^{p -1}\left(\sum_{i\equiv r+s \mod p } f_{11r}f_{22s} - f_{12r} f_{21s}  \right)\not\in \left\{0,\, p\sum_{{\substack{0\equiv r+s\mod p \\ 0\leq r \leq  s\leq p-1}}}\left(f_{11r}f_{22s} - f_{12r} f_{21s}\right)\right\};\]
then, by Lemma~\ref{corgusta}, $g(x)$ is PP. In fact, besides the condition 
\[\displaystyle{\sum_{i=0} ^{p -1}\left(\sum_{i\equiv r+s \mod p } f_{11r}f_{22s} - f_{12r} f_{21s}  \right)\neq 0},\]
the other condition on $g(x)$ to be a PP is  
\begin{eqnarray}
&&\left(1 -\frac{1}{p}\right)\sum_{0\equiv r+s\mod p}\left( f_{11r}f_{22s} - f_{12r} f_{21s}\right) -\frac{1}{p}\sum_{i=1} ^{p -1}\left(\sum_{i\equiv r+s \mod p } f_{11r}f_{22s} - f_{12r} f_{21s}  \right) \neq 0\nonumber \\
&\Leftrightarrow&\left(1-p \right)\sum_{0\equiv r+s\mod p}\left( f_{11r}f_{22s} - f_{12r} f_{21s}\right) + \sum_{i=1} ^{p -1}\left(\sum_{i\equiv r+s \mod p } f_{11r}f_{22s} - f_{12r} f_{21s}  \right)\neq 0\nonumber \\
&\Leftrightarrow& \sum_{i=0} ^{p -1}\left(\sum_{i\equiv r+s \mod p } f_{11r}f_{22s} - f_{12r} f_{21s}  \right)\neq p\sum_{{\substack{0\equiv r+s\mod p \\ 0\leq r\leq s\leq p-1}}}\left( f_{11r}f_{22s} - f_{12r} f_{21s}\right),\nonumber
\end{eqnarray}
and the result follows.
\end{enumerate}
\end{proof}

\begin{remark}
Since any linear polynomial over $\mathbb{F}_{q^{2p}}$ with coefficients in $\mathbb{F}_{q^{2}}$ can be represented as in Theorem~\ref{coefpol} (part $1$), being a PP depends exclusively on the coefficients $f_{jki}$ that appear on that representation, with $1\leq j, k \leq 2$ and $i=1,...,p-1$. Thus, it may be possible to construct new PP by imposing conditions on these coefficients that imply \[gcd\left(\sum_{i=0} ^{p -1}\left(\sum_{i\equiv r+s \mod p } f_{11r}f_{22s} - f_{12r} f_{21s}  \right)x^i, x^{p}-1 \right)=1,\]    
as it was done in the second part of Theorem \ref{coefpol} (part $2$). Example \ref{finalex} illustrates this fact.
\end{remark}
% All the computations have been done using SageMath. 

\begin{example}\label{finalex}
Let $p=3$ and $q=5$, then $\mathbb{F}_{q^{2p}}=\mathbb{F}_{5^{6}}$ and $5$ is a primitive element in $\mathbb{F}_{3}$.
In addition, $\displaystyle{\mathbb{F}_{q^2}=\mathbb{F}_{5^2}\cong \frac{\mathbb{F}_5 [x]}{\left\langle x^2 + 4x +2 \right\rangle}}$, where $f(x)=x^2 + 4x +2 $ is a primitive polynomial. Let $\alpha$ be one of the roots of $f$. Then $B=\left\{\alpha,\alpha^{q}=\alpha^5 \right\}$ is a $\mathbb{F}_{5}$-normal basis of $\mathbb{F}_{5^2}$ and $B'=\left\{u=\alpha^4 , u^{q}=\alpha^{20} \right\}$ is its respective dual basis. If 
\begin{eqnarray*}
g(x) &:=&\sum_{i=0} ^{2} g_i \left(x^{q^{2i}}\right)= g_0 \left(x\right)+ g_1 (x^{q^{2}})+ g_2 (x^{q^{4}})\\
                                                 &=& \left[ \left(3 \alpha^{q} + 1 \alpha \right)u + \left(3 \alpha^{q} + 1 \alpha \right)u^q  \right]x + \left[\left(3 \alpha^{q} + 1 \alpha \right)u + \left(3 \alpha^{q} + 1 \alpha\right)u^{q}  \right]x^q\\
                                                 &+& \left[ \left(1 \alpha^{q} + 1 \alpha \right)u + \left(3 \alpha^{q} + 1 \alpha \right)u^q  \right]x^{q^{2}} + \left[\left(3 \alpha^{q} + 1 \alpha \right)u + \left(1 \alpha^{q} + 1 \alpha\right)u^{q}  \right]x^{q^{3}}\\
                                                 &+& \left[ \left(4 \alpha^{q} + 4 \alpha \right)u + \left(1 \alpha^{q} + 2 \alpha \right)u^q  \right]x^{q^{4}} + \left[\left(1 \alpha^{q} + 2 \alpha \right)u + \left(4 \alpha^{q} + 4 \alpha\right)u^{q}  \right]x^{q^{5}}
\end{eqnarray*}
(which is represented as in Theorem \ref{coefpol}, part $1$), then
\[\sum_{i=0} ^{2}\left(\sum_{i\equiv r+s \mod 3 } f_{11r}f_{22s} - f_{12r} f_{21s}  \right)=4\not\in \left\{0,\, 3  \left(\sum_{{\substack{0\equiv r+s\mod 3  \\ 0\leq r \leq  s\leq 2}}}\left(f_{11r}f_{22s} - f_{12r} f_{21s}\right)\ \right)=3(1)=3\right\}.\]
Thus, by Theorem~\ref{coefpol} (part $2$), $g(x)$ is a PP.
%Thus, if $\beta$ is a primitive element of $\mathbb{F}_{5^6}$, then $\beta^{651}$ is a primitive element of $\mathbb{F}_{5^2}$. Therefore, $g(x)= \beta^{7812} x^{25} + \beta^{651} x^{20}+ \beta^{10416}x^{15}+ \beta^{6510}x^{10} +\beta^{9114}x^{5}$ is a linear permutation on $\mathbb{F}_{5^6}$ with coefficients on $\mathbb{F}_{5^2}$. 
\end{example}

\section{Conclusion}

In this work, some explicit descriptions of families of linear PPs in $\mathbb{F}_{q^{ms}}$ with coefficients in $\mathbb{F}_{q^{m}}$ were presented, and the sizes of some of these families were computed. These results extend and complement the results in \cite{bcv} and \cite{yuan2011,zhou2008}, respectively. In addition, several criteria to determine if linear polynomials of $\mathbb{F}_{q^{2p}}$ with coefficient in $\mathbb{F}_{q^{2}}$ are PPs were given. \\ 

The particular families here introduced as SPPs, BPPs, DPPs, SBPPs, and SDPPs, could be of use in applications to coding theory and cryptography. In particular, the simplicity of the shapes of these families of polynomials might be of use for studying the cycle structure of these permutations, which is a problem of interest for its various applications, among which is the construction of turbo codes (see \cite{gerike2020}). % \textcolor{red}{Perhaps we say these linear PP could be combined with other polynomials to generate other permutations with a more general shape? I need to check that}

\end{document}